%% file: main.tex
\documentclass[letterpaper, 10 pt, conference]{IEEEtran}  
\IEEEoverridecommandlockouts 
\usepackage{cite}
\usepackage{amsmath,amssymb,amsfonts,mathptmx,colortbl}
 
\usepackage{amsthm,BOONDOX-cal}
\usepackage{graphicx}
\usepackage{enumerate}
\usepackage{relsize}
\usepackage{epsfig}
\usepackage[font=small,labelfont=bf]{caption}
\usepackage{bm}
\usepackage{epstopdf}
\usepackage{siunitx}
\usepackage{algorithm,array}
\usepackage[noend]{algpseudocode}
\usepackage{derivative}
\usepackage{multirow,multicol}

\usepackage{subfig,braket,enumitem,adjustbox,booktabs,slashbox}
\usepackage{color}
\usepackage{float}
\newfloat{algorithm}{t}{lop}
\usepackage{pgf,tikz}
\usetikzlibrary{arrows}
\usepackage{bbm}
\usepackage[overload]{empheq}
 \usepackage{tabularx, booktabs, makecell, caption,relsize}

\newcommand{\oprocendsymbol}{\hbox{$\bullet$}}
\newcommand{\oprocend}{\relax\ifmmode\else\unskip\hfill\fi\oprocendsymbol}

\newtheorem{theorem}{Theorem}

\newtheorem{lemma}[theorem]{Lemma}
\newtheorem{corollary}[theorem]{Corollary}
\newtheorem{remark}[theorem]{Remark}

\newcommand{\integernonneg}{\ensuremath{\mathbb{Z}_{\geq 0}}}
\newcommand{\real}{\ensuremath{\mathbb{R}}}
\newcommand{\complex}{\ensuremath{\mathbb{C}}}

\def\tablename{Table}

\newcommand{\Ec}{\mathcal{E}}

\newcommand{\Hc}{\mathcal{H}}

\newcommand{\Jc}{\mathcal{J}}

\newcommand{\Pc}{\mathcal{P}}
\newcommand{\Qc}{\mathcal{Q}}

\newcommand{\Tc}{\mathcal{T}}

\newcommand{\Vc}{\mathcal{V}}
\newcommand{\Wc}{\mathcal{W}}

\newcommand{\T}{^{\top}}

\newcommand{\Fb}{\overline{F}}

\newcommand{\dspi}{\displaystyle\int}

\newcommand{\until}[1]{\{1,\dots,#1\}}

\newcommand\pd{\partial}
\newcommand\eb{\overline{e}}
\newcommand\ebi{\overline{e}_i}
\newcommand\ebij{\overline{e}_{ij}}
\newcommand\ebj{\overline{e}_j}
\newcommand\ebk{\overline{e}_k}
\newcommand\srad{\text{srad}}

\newcommand{\cij}{\alpij}

\newcommand{\vqid}{\mu_q^i}

\newcommand{\Vi}{V^{-1}}
\newcommand{\Vit}{V^{-\top}}

\newcommand{\ub}{\overline{u}}

\newcommand{\longthmtitle}[1]{\mbox{}{\textit{(#1).}}}
\newcommand{\alpij}{\alpha_{ij}}
\newcommand{\VM}{\gamma}
\newcommand\td{\mathrm{d}}
\allowdisplaybreaks
\graphicspath{{epsfiles/}}
\usepackage{relsize}
\usepackage{tikz}
\tikzset{
>=stealth',
  punktchain/.style={rectangle, rounded corners, 
    draw=black, very thick,text width=10em, 
    minimum height=3em, text centered, on chain},
  line/.style={draw, thick, <-},
  element/.style={tape,top color=white,bottom color=blue!50!black!60!,
    minimum width=8em,draw=blue!40!black!90, very thick,
    text width=10em, minimum height=3.5em, text centered, on chain},
  every join/.style={->, thick,shorten >=1pt},
  tuborg/.style={decorate},
  tubnode/.style={midway, right=2pt},
}
\usepackage{eucal}
\usepackage{caption} 
\usepackage{tikz}
\usetikzlibrary{external}
\usepackage{standalone}
\usepackage{pgfplots}
\pgfplotsset{compat=1.6, scaled x ticks = false, xticklabel style={/pgf/number format/fixed,/pgf/number format/precision=3},legend image post style={xscale=0.3},
legend image post style={yscale=0.6}} 

\title{\LARGE \bf  Gramian-Informed Framework for Edge Flow Network Analysis Against Nodal Attacks\thanks{This work was supported in part by ARO Award W911NF-23-1-0138, SR/PURSE/2023/172 DST-PURSE Government of India, and by the National Laboratory of the Rockies (NLR) for the U.S. Department of Energy (DOE), operated under Contract No. DE-AC36-08GO28308. Funding for the NLR author was provided by the Laboratory Directed Research and Development (LDRD) Program at NLR. The views expressed do not necessarily represent those of the DOE or the U.S. Government. The U.S. Government retains, and the publisher, by accepting the article for publication, acknowledges that the U.S. Government retains a nonexclusive, paid-up, irrevocable, worldwide license to publish or reproduce this work for Government purposes.}}
\author{Prasad Vilas Chanekar,~\IEEEmembership{Member,~IEEE,} Bala Kameshwar Poolla,~\IEEEmembership{Member,~IEEE,} Jorge Cort{\'e}s,~\IEEEmembership{Fellow,~IEEE}%
\thanks{Prasad Vilas Chanekar is with the Department of Electronics and Communications Engineering, Indraprastha Institute of Information Technology, New Delhi, India 110020, {\tt prasad@iiitd.ac.in}, Bala Kameshwar Poolla is with the National Laboratory of the Rockies, Golden, CO 80401, USA, {\tt bpoolla@nlr.gov}, Jorge Cort{\'e}s is with the Department of 	Mechanical and Aerospace Engineering, University of California, San Diego, La Jolla, CA 92093, USA, {\tt cortes@ucsd.edu}}%
}

\begin{document}
\maketitle

\begin{abstract} 
We study the first-order effects of nodal inputs on edge flows of network systems through controllability Gramians. We consider both discrete- and continuous-time dynamics subject to impulse and step input disturbances. To characterize their impact on network behavior, we introduce the notion of a vulnerability matrix (VM) and provide explicit Gramian-based expressions that reveal the critical role played by the interplay between network topology and dynamics. For the class of directed line networks, we particularize these closed-form expressions in terms of edge weights, input duration, and graph distance to the nodal input. Numerical simulations on directed line and random Erd\H{o}s-R\'{e}nyi networks show a tight correspondence between influential nodes identified with the VM and conventional time-domain performance metrics, such as $\Hc_2$ and~$\Hc_{\infty}$.
\end{abstract}

\begin{IEEEkeywords}
network systems, controllability Gramian, edge-flow analysis, first-order effects, nodal vulnerability
\end{IEEEkeywords}

\section{Introduction}\label{introduction-1}
Complex networks are critical components of modern society, touching almost every aspect of daily life. We encounter them in social dynamics, intelligent transportation, water distribution systems, biological systems, energy systems, and robotics. Network evolution is naturally modeled as dynamics on graphs using nodes and edges. Inputs are applied at the nodes and get propagated across the network through flows on the edges, whose capacity is often constrained. The edge flows may represent various physical quantities depending upon the application, e.g., power (in energy networks), traffic (in transportation systems), or information (in communication networks). The ability to alter network behavior through nodal inputs and their propagation through capacity-constrained edges depends on the network topology as well as its dynamics. Malicious inputs at influential nodes can result in large, undesirable changes in edge flow characteristics and lead to network failures. Such influential nodes are ideal for adversarial attacks and make the network vulnerable. This paper develops formal tools to identify influential nodes by looking at the effect of impulse and step inputs on edge flows of networks modeled as linear time-invariant systems.

{\it Literature review:} Given a complex network~\cite{FB-LNS:22, FB-JC-SM:08cor} and a performance metric, the relative importance of a node or edge is quantified by the notion of centrality~\cite{MOJ:10, EE:12, MN:18}. Centrality is influenced by the network topology and the inherent dynamical properties of the nodes and edges; see~\cite{PVC-JC:22} for a list of various centrality measures. Traditionally, the study of centrality measures has revolved around the topology of the network, largely ignoring the interplay with its dynamical features. More recent work, see e.g.,~\cite {FP-SZ-FB:14, THS-FLC-JL:16, PVC-EN-JC:20} and references therein, studies the combined effect of network topology and the underlying dynamics. In \cite{GL-CA:20}, node-to-network and network-to-node influence are quantified using measures based on the controllability and observability Gramians. In \cite{SP-AD-MF-UV:14}, a Gramian-based analysis of a network's vulnerability to coordinated attacks on network links and data-injection attacks is presented.

{The work~\cite{IS-FF-THS:17} estimates the effect of attack inputs injected by an adversarial agent on the outputs and states of a network system using a novel security index based on the $\Hc_2-$norm. This norm is also employed in~\cite{MS-NM:13} to characterize robustness properties and fundamental limits of dynamical networks against external distributed stochastic disturbances and in~\cite{QH-YY-JG-MAD:14} to identify vulnerable nodes and interconnections against noisy nodal inputs. A different perspective is offered in~\cite{FP-SZ-CF_SZ:20}, which uses network fragility (computed using the stability radius) to characterize the network's response to external stimuli or parameter changes using network fragility, computed via the stability radius. Thus, in general, the vulnerability of a dynamical network system is analyzed using abstract system properties such as controllability, observability, or stability radius.

The literature on network flow vulnerability is comparatively limited. The work~\cite{AEM-YCL:2002} shows that when a node is attacked, the system load is redistributed to other nodes via edge flow redistribution, which may trigger cascading failures. Dynamic network flow vulnerabilities are analyzed in \cite{SS-DM-GZ:15} through a sequence of steady states. The work \cite{MT-LP-PJ:2019} argues that in synchronizable network-coupled dynamical systems such as high-voltage electric power grids, the synchrony is disturbed by nodal injections and presents a method based on spectral decomposition of the coupling matrix to identify the vulnerable nodes. An analytical study in~\cite{DM-MR-HR-XZ-SH-DW-MT:17} provides network susceptibilities quantifying the response of the dynamics to small parameter changes. Finally, \cite{BS-DW-MT-VL:18} highlights the failure of network components due to transient effects often overlooked by sequential steady-state analysis. More recently, our previous work \cite{PVC-BKP-JC:2023} demonstrated a  Gramian-based first-order relationship between changes in network flows arising due to impulsive nodal inputs. Building on this, \cite{MB-SME-AFT:2025} extends these results for shifted pulse and shifted step inputs and applies them to identify influential pipes in water networks.

\subsubsection*{Statement of Contributions} 
The contributions of this paper are threefold: (i) firstly, we provide mathematical expressions characterizing the first-order effects of nodal inputs, including both impulse and step inputs, on network edge flows for both discrete- and continuous-time dynamics. These expressions, derived in terms of the controllability Gramians of the underlying network, capture how localized perturbations propagate through the network topology. Further, we generalize the expressions for any matrix $A$, without requiring any stability assumptions. Next, based on these results, we propose the notion of (relative) Vulnerability Matrix (VM) to encode pairwise sensitivity relationships across all nodes and edges in a unified structure. Relative VM normalizes the flows to the system's steady-state flows; (ii) secondly, we derive detailed parametric expressions that capture vulnerability for the special class of directed line networks, revealing the explicit dependence on edge weights, path length between the input and observed nodes, and input duration; (iii) thirdly, we numerically validate the correspondence between the VM metrics and the observed time-domain evolution as computed from network dynamics. To this end, we introduce an algorithmic approach for determining nodal vulnerability at scale. This algorithm enables efficient computation of nodal vulnerability rankings for large networks, enabling practical vulnerability assessment for large networks. 

This work extends our preliminary, conference version in \cite{PVC-BKP-JC:2023} by: (a) relaxing the stability assumption on the network matrix $A$; (b) examining the first-order effects of step inputs on edge flows and numerically validating them; (c) deriving generic expressions for nodal vulnerability in terms of controllability Gramians and precise closed-form parametric expressions for a special class of directed-line networks-- without constraining the analysis to adjacent edges.}

\section{System Description}
Consider\footnote{\textit{Notation}: We denote by $\integernonneg$, $\real$, and $\complex$ the set of non-negative integers, real, and complex numbers, respectively. For $x\in \real$ (resp. $x\in \complex$), $\vert x\vert$ denotes its absolute value (resp. magnitude). The real part of $x\in \complex$ is denoted by $\mathcal{R}(x)$. The Dirac delta function is denoted by $\delta(t)$. The transpose of a vector or matrix is denoted by $(\cdot)^{\top}$. For $j \in \until{n}$, $\eb_j \in \real^n$ is the $j^\text{th}$ canonical unit vector (the bar is used to distinguish from the exponential~$e$). For a vector $x\in \real^n$, $x_i$ or $[x]_i$ denotes its $i^\text{th}$ coordinate. Given a  matrix~$M$, we denote its $(i,j)^\text{th}$ element by $m_{ij}$  or $M_{ij}$. For a square matrix $N$, we use $\lambda(N)$ to denote the vector of eigenvalues and $\srad(N)$ for its spectral radius.  We let $I$ denote the identity matrix of appropriate dimension.} a network consisting of $n$ nodes represented by the triplet $\mathcal{G}_A =
\left(\mathcal{V},\,\mathcal{E}_A,\,w_A\right)$, where
$\mathcal{V}=\lbrace 1,\,2,\ldots,n\rbrace$ is the node set, $\mathcal{E}_A=\set{\left(i,j\right) \mid i\in \mathcal{V}, j\in \mathcal{V}}$ is the edge set (with no self-loops) of cardinality $n_e$, and $w_A:\mathcal{E}_A\mapsto \real$ is the weight function. The pair $(i,j)$ denotes an edge directed from node $i$ to node $j$, i.e., $i\rightarrow j$. The weighted adjacency matrix $A \in\real^{n\times n}$ is defined by
$a_{ji} = w_A (i,j)\neq0$ if $\left(i,j\right)\in
\mathcal{E}_A$, else $a_{ji}=0$. The network dynamics are linear and either continuous- or discrete-time-invariant, with time evolution given by
\begin{subequations}\label{eq-dynamics-whole}
\begin{align}
\label{eq-dynamics-continuous}\dot{x} (t)&=Ax\left(t\right)+b_k\, \ub, \quad t\geq 0,\\
\label{eq-dynamics-discrete}x\left(t+1\right)&=Ax\left(t\right)+b_k\, \ub \quad t \in \lbrace0,\ldots,T-1\rbrace,
\end{align}
\end{subequations}
where  $x\in \real^n$, $t$ denotes time/iteration, and $T>0$ is a finite time horizon. Here $\ub\in \real$  is the input to the system applied at node $k$ at $t=0$, with its location denoted by the input vector $b_k=\eb_k$.  The input $\ub$ may be  an impulse $\ub = u\,\delta(t)$ or a step $\ub=u$ with magnitude $u$. The initial state is $x(0)=x_0 \in \real^n$. Note that we make no stability assumptions on the matrix $A$.

Let the $n_e$ edges in the edge set $\Ec_A$ be numbered consecutively and let $i\rightarrow j$ be the $l^{th}-$edge. We define the output for the dynamics~\eqref{eq-dynamics-whole} along the edge $i\rightarrow j$ at time $t$ as
\begin{align}
\label{eq-flow}
y_l(t)=F_{ij}(t) = \alpij(x_i(t) -x_j(t)),
\end{align}
where $\alpij \in\real$ is a known flow coefficient constant. We note that $y_l\in \real$ in \eqref{eq-flow} indirectly depends on the input $\ub$ through~$x$ due to the system dynamics \eqref{eq-dynamics-whole}. The output and variations thereof defined in~\eqref{eq-flow} are often used to quantify network edge flows~\cite{FD-MC-FB:13, DM-MR-HR-XZ-SH-DW-MT:17} and are relevant in network theory. In power systems, the matrix $A$ may be a Laplacian, with the choice $\alpij=a_{ji}$ representing line susceptances, and $x$ the vector of bus voltage angles. The quantity \eqref{eq-flow} then represents the active power flow on the line~\cite{FD-MC-FB:13} under the DC power flow approximation of $1~\mathrm{p.u.}$ voltage. For analysis, disturbances such as lightning strikes, electrostatic discharges, and utility fault clearing are often modeled as impulses. Thus, analyzing the effect of disturbances on edge flows is critical, as these are often capacity-constrained. Another perspective of interest relates to \eqref{eq-flow} being a weighted information flow in networks where the evolution of the states is governed by \eqref{eq-dynamics-whole}.

Let the vector $y=[y_1\; y_2\;\ldots\;y_{n_e}]\T$, an $n_e-$dimensional vector of edge flows of all the edges in the edge set $\Ec_A$ stacked vertically, represent the complete set of network edge flows. The performance measures~\cite{KZ-JD-KG:95} for analyzing systems include:
the average performance metric, given by the $\Hc_2$ norm,
\begin{subequations}
\label{performance-metrics-inf}
    \begin{align}\label{metric-h2}
         \Hc_2&:= \Vert y (t) \Vert_2^2 = \displaystyle\int_{0}^{\infty} y\T(t) \,y (t)\;\td t,    
    \end{align}
     and the worst-case performance metric, given by the $\Hc_{\infty}$ norm
    \begin{align}\label{metric-hi}
        \Hc_{\infty} &:= \sup_{\ub \neq 0}\;\frac{\Vert y(t)\Vert_2^2 }{\Vert \ub\Vert_2^2 }.
    \end{align}
\end{subequations}

Our objective is to determine the effect of exogenous inputs on the network edge flows and develop concepts and tools that quantify it, with the idea of facilitating the identification of network vulnerabilities. As we are dealing with a general class of systems, we consider finite-time performance metrics: the $\Jc_{2}$ performance modeled on the $\Hc_{2}$ norm and the maximum flow deviation performance $\Jc_{\infty}$ modeled on the $\Hc_{\infty}$ norm. Let $i\rightarrow j$ be the $q^{\text{th}}$ edge of the edge set $\Ec_A = \{1,\, 2,\ldots,q,\ldots,n_e\}$. As the output $y(t)$ is a vector of all edge flows stacked together, we have the $q^{\text{th}}-$ entry of $y(t)$, i.e., $y_q(t)=F_{ij}(t)$ and $y(t)\in\real^{n_e}$. 
Note that to evaluate the standard $\Hc_{2}$ and $\Hc_{\infty}$ metrics, we need to consider the edge flows over the entire time period $t\in[0, \infty)$. Thus, modeled on the above-mentioned performance measures, we define time-limited versions of these metrics, i.e., the time-limited average performance metric (analogously defined for discrete-time dynamics)
\begin{equation*}\label{performance-metrics}
         \Jc_2^f:= \Vert y (t) \Vert_2^2 = \displaystyle\int_{0}^{T} y\T(t) \,y (t)\;\td t,    
\end{equation*}
and the time-limited worst-case performance metric, i.e., the sum of the maximum absolute values of the change in flows of each edge for the simulation interval $[0,\; T]$, i.e.,
\begin{equation*}
\Jc^f_{\infty}:=\sum_{q=1}^{n_e}\, \underset{t\in[0,\, T]}{\max}\;\vert y_q(t)\vert=\sum_{q=1}^{n_e}\,\mathcal{F}_{ij},
\end{equation*}
where $\mathcal{F}_{ij}=\max|F_{ij}|$.

\section{Effect of Nodal Inputs on Edge Flows}\label{nodal-input}
In this section, we present a novel metric, termed the Vulnerability Matrix (VM), which quantitatively characterizes the effect of nodal inputs on the edge flows. The VM is determined by the network's topological properties as well as the inherent system-level dynamics. First, we derive analytical expressions for the evolution of edge flows for impulse nodal inputs/disturbances, followed by an analogous derivation for step inputs. We use the notation `$\;\vert^k\;$' to indicate the effect of an input exerted at node~$k$\footnote{When clear from context, we omit $\vert^k$ to denote the input node $k$ for improved readability.}.

\subsection{Nodal Impulse Inputs}
\label{impulse-input}
Here, we analyze the first-order effects of nodal impulse inputs on the network flows and compute the relevant analytical expressions. 
	
\begin{lemma}
\longthmtitle{Analytical expressions for evolution of flows \cite[Lemma 3]{PVC-BKP-JC:2023}}\label{lem-flow-expressions}
Consider the network dynamics \eqref{eq-dynamics-whole}. If an impulse input $\ub=u \,\delta(t)$ with $u\in\real$ is applied at node $k$ at $t=0$, then the time evolution of the flow $F_{ij}(t)$ on the edge $i\rightarrow j$, denoted by the output~\eqref{eq-flow} for an initial state $x(0)=x_0$, is given by
\begin{enumerate} 
\item for continuous-time dynamics,\\
$F_{ij}(t)\vert^k=\alpij(\eb_i-\eb_j)^{\top}e^{At}(x_0+\eb_k\,u), \; \text{for } t \geq 0,$
\item for discrete-time dynamics,\\
$ F_{ij}(t)\vert^k=\alpij(\eb_i-\eb_j)^{\top}A^{t-1}(A\,x_0+\eb_k\,u), \; \text{for } t \geq 1.$
\end{enumerate}
\end{lemma}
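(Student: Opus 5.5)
The plan is to write down the explicit solution of each linear system in \eqref{eq-dynamics-whole} under the impulse input. Then we project it onto the edge direction via \eqref{eq-flow}, noting that $F_{ij}(t)=\alpij(x_i(t)-x_j(t))=\alpij(\eb_i-\eb_j)\T x(t)$. So it suffices to compute $x(t)$ in each case and premultiply by $\alpij(\eb_i-\eb_j)\T$. No stability assumption on $A$ is needed, since only the finite-time solution formula is used.

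\emph{Continuous time.} Use the variation-of-constants formula
\[
x(t)=e^{At}x_0+\int_{0^-}^{t}e^{A(t-\tau)}\eb_k\,u\,\delta(\tau)\,\td\tau ,
\]
and apply the sifting property of $\delta$ to evaluate the integral as $e^{At}\eb_k u$. This gives $x(t)=e^{At}(x_0+\eb_k u)$ for $t\geq 0$. Equivalently, the impulse acts as an instantaneous jump $x(0^+)=x_0+\eb_k u$, after which the free response $e^{At}x(0^+)$ follows. The only delicate point, and the main thing I would address explicitly, is the convention at $t=0$. The integral must be taken from $0^-$ so that the full impulse mass is captured, and "$t\geq 0$" must be read as $t\geq 0^+$, i.e.\ the post-jump state. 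I would state this convention and justify it either by the distributional derivative or by a limit of narrow rectangular pulses of area $u$.

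\emph{Discrete time.} Interpret the impulse as the Kronecker input $\ub(t)=u$ for $t=0$ and $0$ otherwise. Then $x(1)=Ax_0+\eb_k u$, and for $t\geq1$ the recursion is homogeneous, so $x(t)=A^{t-1}x(1)=A^{t-1}(Ax_0+\eb_k u)$. This can be confirmed by a one-line induction on $t$, or by reading it off from the general formula $x(t)=A^t x_0+\sum_{s=0}^{t-1}A^{t-1-s}\eb_k\ub(s)$, in which only the $s=0$ term survives. Premultiplying each expression for $x(t)$ by $\alpij(\eb_i-\eb_j)\T$ yields items 1 and 2.
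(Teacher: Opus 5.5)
Your proof is correct and follows essentially the same route as the paper. The paper gives no separate proof here: it defers to its conference version and, in the proof of Theorem~\ref{thm-first-order-effects-impulse}, simply invokes the standard LTI solution formula, namely the impulse absorbed as $x(0^+)=x_0+\eb_k u$ in continuous time and the Kronecker-delta recursion in discrete time, followed by projection with $\alpij(\eb_i-\eb_j)\T$. Your remark that ``$t\geq 0$'' must be read as $t\geq 0^+$ is a useful precision that the paper leaves implicit.
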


Next, we quantify the effect of impulse inputs on flows for continuous-time network dynamics through the controllability Gramian. {We define the finite-time controllability Gramian $\Pc^k_{\theta}$  for the time interval $[0,\;\theta]$ and input node $k$ as \cite{CTC:98} $$\Pc^k_{\theta}=\mathlarger{\sum}_{p=0}^{\theta-1}A^p\,\ebk\,\ebk\T\,{A^ p}\T$$ for the discrete-time dynamics and $$\Pc^k_{\theta}=\displaystyle\int_{0}^{\theta}e^{At}\,\ebk\,\ebk\T\, e^{A\T t}\,\td t$$ for the continuous-time case.}
    
\begin{theorem}\longthmtitle{First-order effects of impulse inputs on states and edge flows}
\label{thm-first-order-effects-impulse}	
Consider the continuous-time/discrete-time network dynamics in \eqref{eq-dynamics-whole}.  If an impulse input $\ub=u\,\delta(t)$ with $u\in\real$ is applied at node $k$ at $t=0$, then  we have the following for the node $i$ and the edge $i\rightarrow j$,
\begin{enumerate}[leftmargin=0.5cm,label=(\roman*)]
\item
$ \displaystyle\int_{0}^{{T}}\Big(\frac{\partial x_i}{\pd u}\Big)^2\bigg\vert^k\,\!\! \textup{d}t={\Qc}^k_{ii},$
\item $\displaystyle\int_{0}^{{T}}\Big(\frac{\pd F_{ij}}{\pd u}\Big)^2\bigg\vert^k\textup{d}t=\alpij^2\big({\Qc}^k_{ii}+{\Qc}^k_{jj}-2{\Qc}^k_{ij}\big),$
\item
$\mathlarger{\sum}_{t=0}^{{T}}\Big(\dfrac{\pd x_i}{\pd u}\Big)^2\bigg\vert^k={\Qc}^k_{ii},$
\item $\mathlarger{\sum}_{t=0}^{{T}}\Big(\dfrac{\pd F_{ij}}{\pd u}\Big)^2\bigg\vert^k=\alpij^2\big({\Qc}^k_{ii}+{\Qc}^k_{jj}-2{\Qc}^k_{ij}\big),$
	\end{enumerate}
where ${\Qc^k= \Pc^k_T}$.
\end{theorem}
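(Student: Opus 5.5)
The plan is to start from the closed-form flow expressions in Lemma~\ref{lem-flow-expressions}, differentiate them with respect to the impulse magnitude $u$, and recognize the resulting squared integrals (or sums) as quadratic forms in the finite-time controllability Gramian $\Pc^k_T$.

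\textbf{Continuous time.} Without the output map $\alpij(\eb_i-\eb_j)\T$, Lemma~\ref{lem-flow-expressions} gives the state itself:
\[
x(t)=e^{At}(x_0+\eb_k u).
\]
Hence $\partial x/\partial u = e^{At}\eb_k$ and $\partial x_i/\partial u=\ebi\T e^{At}\ebk$. Both are independent of $x_0$ and of $u$, which is why the expressions are exact first-order effects and no stability of $A$ is needed. Squaring the scalar and writing it as $\ebi\T e^{At}\ebk\ebk\T e^{A\T t}\ebi$, then integrating over $[0,T]$ and pulling the constant vectors out of the integral, gives $\ebi\T\Pc^k_T\ebi=\Qc^k_{ii}$. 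This is (i).

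For (ii), Lemma~\ref{lem-flow-expressions} gives $\partial F_{ij}/\partial u=\alpij(\ebi-\ebj)\T e^{At}\ebk$. The same manipulation yields
\[
\alpij^2(\ebi-\ebj)\T\Qc^k(\ebi-\ebj).
\]
Expanding with the symmetry of $\Qc^k$ (it is a sum or integral of symmetric rank-one terms) gives $\Qc^k_{ii}+\Qc^k_{jj}-2\Qc^k_{ij}$.

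\textbf{Discrete time.} Here $x(t)=A^{t-1}(Ax_0+\ebk u)$ for $t\ge1$, so $\partial x_i/\partial u=\ebi\T A^{t-1}\ebk$. Squaring and summing, the re-indexing $p=t-1$ turns the sum into $\sum_p \ebi\T A^p\ebk\ebk\T {A^p}\T\ebi$. The key step is to match the summation range with the definition $\Pc^k_T=\sum_{p=0}^{T-1}A^p\ebk\ebk\T{A^p}\T$.

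\textbf{Main obstacle: the index bookkeeping in the discrete case.} The sum in the statement is written over $t=0,\dots,T$, whereas the lemma is stated for $t\ge1$. I would handle this as follows:
\begin{itemize}
\item At $t=0$, the state is $x(0)=x_0$, which does not depend on $u$, so that term contributes zero.
\item The dynamics run only for $t\in\{0,\dots,T-1\}$, so the trajectory ends at $x(T)$. The terms $t=1,\dots,T$ then correspond to $p=0,\dots,T-1$, which is exactly the range in $\Pc^k_T$.
\end{itemize}
Should the convention instead require a shift, the statement would hold with the appropriate $\Pc^k_{T\pm1}$; I would fix it by checking the definition of the horizon. Item (iv) then follows from (iii) by the same expansion of the quadratic form as in (ii).
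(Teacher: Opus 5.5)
Your proposal is correct and follows essentially the same route as the paper: differentiate the explicit solution $\ebi\T e^{At}(x_0+\ebk u)$ with respect to $u$, square, and recognize $\ebi\T\Pc^k_T\ebi$, with (ii) following from the quadratic form in $\ebi-\ebj$ (the paper expands the square first and identifies the cross term as $\Qc^k_{ij}$, which is the same computation). Your discrete-time bookkeeping ($t=0$ contributes nothing, and $t=1,\dots,T$ maps to $p=0,\dots,T-1$) is right and supplies the detail the paper defers to its conference version, so the hedge about $\Pc^k_{T\pm 1}$ is unnecessary.
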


\begin{proof}
For continuous-time dynamics \eqref{eq-dynamics-continuous} with an impulse input $\ub$, we have \cite{KO:10},
$$x_i(t)=\eb_i\T e^{At}(x_0+b_k\,u), \; \text{for } t \geq 0.$$	
Differentiating with respect to the input variable $u$ and noting that $b_k=\eb_k$ for the nodal input location, we have
\begin{align*}
		\frac{\pd x_i}{\pd u}=\eb_i^{\top}\,e^{At}\,\eb_k, \,\,\, \text{\it for } t \geq 0.
\end{align*}
To evaluate the overall impact over time, we integrate   from $t = 0$ to $\infty$, and obtain
$$\displaystyle\int_{0}^{{T}}\Big(\frac{\pd x_i}{\pd u}\Big)^2\,\textup{d}t =\displaystyle\int_{0}^{{T}} (\eb_i^{\top}\,e^{At}\,\eb_k)^2 \textup{d}t=\displaystyle\int_{0}^{{T}} (\eb_i^{\top}\,e^{At}\,\eb_k) (\eb_k\T e^{A\T t} \eb_i)\td t.$$ 
From the definition of the continuous-time controllability Gramian~\cite{KZ-JD-KG:95} for the input matrix $\eb_k$, we get 
$$\displaystyle\int_{0}^{{T}}\Big(\frac{\pd x_i}{\pd u}\Big)^2\,\textup{d}t = \eb_i\T {\Pc}^k_T \eb_i={\Qc}_{ii}^k,$$ establishing \emph{(i)}.
For \emph{(ii)}, we differentiate~\eqref{eq-flow} with respect to the input $u$ to get $$\dfrac{\pd F_{ij}}{\pd u}=\alpij\Big(\dfrac{\pd x_i}{\pd u}-\dfrac{\pd x_j}{\pd u}\Big).$$ On squaring and integrating with respect to $t$ from $0$ to $t\rightarrow \infty$, we obtain the expression
\begin{align*}
\displaystyle\int_{0}^{{T}}\Big(\frac{\pd F_{ij}}{\pd u}\Big)^2\td t=\alpij^2\displaystyle\int_{0}^{{T}}\Bigg(\Big(\frac{\pd x_i}{\pd u}\Big)^2+\Big(\frac{\pd x_j}{\pd u}\Big)^2-2\frac{\pd x_i}{\pd u}\;\frac{\pd x_j}{\pd u}\Bigg)\td t.
\end{align*} 	
Finally, we can compute the last term in the expression as 
\begin{align*}
\displaystyle&\int_{0}^{{T}}\frac{\pd x_i}{\pd u}\,\frac{\pd x_j}{\pd u}\td t=\displaystyle\int_{0}^{{T}}\eb_i\,e^{At}\,\eb_k\,\eb_j\,e^{At}\,\eb_k\td t,\\		&=\eb_i^{\top}\,\Big(\displaystyle\int_{0}^{{T}}e^{At}\,\eb_k\,\eb_k^{\top}\,e^{A\T t}\Big)\eb_j\,\td t=\,\,\,\eb_i^{\top}\,{\Pc}^k_T\,\eb_j={\Qc}^k_{ij}.
\end{align*}
The result for \emph{(ii)} now follows from \emph{(i)} and completes the proof for the continuous-time dynamics. We refer the reader to the conference version~\cite{PVC-BKP-JC:2023} for the proof of the discrete-time expressions in \emph{(iii)} and \emph{(iv)}.
\end{proof}

{In Theorem~\ref{thm-first-order-effects-impulse}, when $A$ is stable (i.e., $\srad(A) <1$  for discrete-time dynamics \eqref{eq-dynamics-discrete} or $\mathcal{R}\big(\lambda_i(A)\big)<0$, for all $i\in\{1,\,2,\ldots,n\}$, for continuous-time dynamics~\eqref{eq-dynamics-continuous}) and we take $T\rightarrow \infty$, we have $\Pc^k_{\infty}= \Wc^k$. Here, $\Wc^k$ is the infinite-time horizon controllability Gramian defined as }$\Wc^k=\mathlarger{\sum}_{p=0}^{\infty}A^p\,\ebk\,\ebk\T\,{A^ p}\T$ for discrete-time systems and $\Wc^k = \displaystyle\int_{0}^{\infty}e^{At}\,\ebk\,\ebk\T\, e^{A\T t}\,\td t$ for continuous-time systems. For a stable system matrix $A$, we can compute the controllability Gramian using the Lyapunov equations \cite{KZ-JD-KG:95} with input only at the node $k$ as the solution to
\begin{subequations}
\label{eq-Lyapunov}
\begin{align}
\label{eq-Lyapunov-continuous}\text{continuous-time:}\quad A\Wc^k+\Wc^k A^{\top}=-\eb_k\,\eb_k^{\top},
\\
\label{eq-Lyapunov-discrete}\text{discrete-time:}\quad A\Wc^k A^{\top}-\Wc^k=-\eb_k\,\eb_k^{\top}.
\end{align}
\end{subequations}

\begin{corollary}\longthmtitle{Relative first-order effect of impulse inputs on states and edge flows}\label{corollary-first-order-effects}
Consider the continuous-time/discrete-time network dynamics in \eqref{eq-dynamics-whole}. If an impulse input of magnitude $u\in\real$ is applied at node $k$ at $t=0$, then the relative effect of the impulse on the flow in the $i\longrightarrow j$ edge with a steady-state flow $\Fb_{ij}$ is		
\begin{enumerate}[leftmargin=0.5cm,label=(\roman*)]
\item $\dfrac{1}{\Fb^2_{ij}}\displaystyle\int_{0}^{{T}}\Big(\frac{\pd F_{ij}}{\pd u}\Big)^2\,\!\! \textup{d}t=\dfrac{\alpij^2}{\Fb^2_{ij}}\big({\Qc}^k_{ii}\!+\!{\Qc}^k_{jj}-2{\Qc}^k_{ij}\big)$,
\item $\mathlarger{\sum}_{t=0}^{{T}}\dfrac{1}{\Fb^2_{ij}}\Big(\dfrac{\pd F_{ij}}{\pd u}\Big)^2=\dfrac{\alpij^2}{\Fb^2_{ij}}\big({\Qc}^k_{ii}\!+\!{\Qc}^k_{jj}-2{\Qc}^k_{ij}\big)$,
\end{enumerate}
where ${\Qc^k=\Pc^k_T}$.
\end{corollary}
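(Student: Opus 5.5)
The corollary follows from Theorem~\ref{thm-first-order-effects-impulse} by a normalization, so I would reduce it to parts \emph{(ii)} and \emph{(iv)} of that theorem. The key observation is that the steady-state flow $\Fb_{ij}$ is a fixed constant. It is determined by the nominal operating point of the network (the flow absent the perturbation) and does not depend on the time $t$ or on the impulse magnitude $u$. I would also assume $\Fb_{ij}\neq 0$, since otherwise the relative quantity is undefined; I would state this assumption explicitly.

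For the continuous-time claim \emph{(i)}, I would pull the factor $1/\Fb^2_{ij}$ out of the integral over $[0,T]$ by linearity of the integral, since it does not depend on $t$. Invoking Theorem~\ref{thm-first-order-effects-impulse}\emph{(ii)} then gives
\begin{align*}
\frac{1}{\Fb^2_{ij}}\int_0^T\Big(\frac{\pd F_{ij}}{\pd u}\Big)^2\td t
=\frac{\alpij^2}{\Fb^2_{ij}}\big(\Qc^k_{ii}+\Qc^k_{jj}-2\Qc^k_{ij}\big).
\end{align*}
Here $\Qc^k=\Pc^k_T$ is the continuous-time finite-horizon Gramian for input node $k$.

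For the discrete-time claim \emph{(ii)}, I would argue the same way. The constant $1/\Fb^2_{ij}$ factors out of the finite sum over $t$, and Theorem~\ref{thm-first-order-effects-impulse}\emph{(iv)} gives the identical right-hand side, now with $\Qc^k$ the discrete-time Gramian $\sum_{p=0}^{T-1}A^p\ebk\ebk\T(A^p)\T$.

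The only point needing care is the justification that $\Fb_{ij}$ is independent of $u$ and $t$. One could object that the steady-state flow depends on $x_0$, and, when $A$ is unstable, that it need not exist as a true limit. I would address this by treating $\Fb_{ij}$ as a prescribed nominal reference flow, for instance the pre-disturbance flow $\alpij(x_{0,i}-x_{0,j})$ or the limit when $A$ is stable. This makes it a $u$-independent normalization constant, and the identities then hold regardless of the stability of $A$. No stability assumption is needed elsewhere, because the statement uses the finite-horizon Gramian $\Pc^k_T$.
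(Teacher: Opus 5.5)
Your argument is the intended one. The paper gives no separate proof, and the corollary follows immediately from Theorem~\ref{thm-first-order-effects-impulse}\emph{(ii)} and \emph{(iv)} by factoring out the $t$- and $u$-independent constant $1/\Fb^2_{ij}$ (with $\Fb_{ij}\neq 0$), exactly as you do. One small caution on your side remark: under \eqref{eq-dynamics-whole} with stable $A$, the flows $F_{ij}(t)$ decay to zero after the impulse, so $\Fb_{ij}$ must be read as an externally prescribed nominal operating-point flow, not as the limit of $F_{ij}(t)$.
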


Next, we establish the equivalence between the $\Jc_2$ and $\Jc_{\infty}$ norms for first-order effects.
\begin{theorem}\longthmtitle{Equivalence of $\Jc_2$ and $\Jc_{\infty}$ first-order effects}\label{h2-hinf-1}
Consider the continuous-time/discrete-time network dynamics in \eqref{eq-dynamics-whole}. Let an impulse input of magnitude $u\in\real$ be applied at node $k$ at $t=0$; then the expressions for $\Jc_2$ and $\Jc_\infty$ for the performance output $y$	are identical when the system operates in the time interval $[0,\;T]$.
\end{theorem}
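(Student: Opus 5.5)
The plan is to interpret both metrics at the level of first-order effects, i.e., applied to the sensitivity output $\partial y/\partial u$, or equivalently to the input-induced part of the flow. Then I will show that both reduce to the same Gramian expression supplied by Theorem~\ref{thm-first-order-effects-impulse}. Let $C\in\real^{n_e\times n}$ be the edge-flow matrix whose $q^{\text{th}}$ row is $\alpij(\eb_i-\eb_j)\T$ for the $q^{\text{th}}$ edge $i\rightarrow j$, so that $y=Cx$.

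\textbf{The $\Jc_2$ side.} By Lemma~\ref{lem-flow-expressions}, in continuous time
\[
y(t)=Ce^{At}x_0+u\,Ce^{At}\eb_k .
\]
Hence $\partial y/\partial u=Ce^{At}\eb_k$ in continuous time, and $\partial y/\partial u=CA^{t-1}\eb_k$ in discrete time. Both are independent of $u$ and $x_0$. Summing items (ii) and (iv) of Theorem~\ref{thm-first-order-effects-impulse} over all edges gives the first-order $\Jc_2$:
\[
\Jc_2=\sum_{q=1}^{n_e}\alpij^2\big(\Qc^k_{ii}+\Qc^k_{jj}-2\Qc^k_{ij}\big)=\mathrm{tr}\big(C\,\Qc^k C\T\big)=\eb_k\T\Big(\int_0^T e^{A\T t}C\T C e^{At}\,\td t\Big)\eb_k .
\]
The discrete-time case is analogous, with the sum $\sum_p A^{p}$ in place of the integral.

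\textbf{The $\Jc_\infty$ side.} For the worst-case metric modeled on~\eqref{metric-hi}, I will take the input-to-output gain of the perturbation. Let $\Delta y=y-Ce^{At}x_0$ be the flow deviation caused by the input; it satisfies $\Delta y=u\,Ce^{At}\eb_k=u\,\partial y/\partial u$. The impulse $\ub=u\,\delta(t)$ is parametrized by the scalar $u$, so I will use the convention $\Vert\ub\Vert^2=u^2$ (impulse strength). Then
\[
\frac{\Vert \Delta y\Vert_{2,[0,T]}^2}{u^2}=\int_0^T\Big\Vert \frac{\partial y}{\partial u}\Big\Vert^2\td t .
\]
This ratio is independent of $u\neq 0$, so the supremum over $u$ is attained trivially and equals the $\Jc_2$ expression above. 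The same cancellation works in discrete time with sums.

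The main obstacle is making precise what $\Jc_\infty$ means for an impulse, since $\Vert u\delta\Vert_2$ is not finite in the literal $L_2$ sense. I will handle this by stating explicitly the convention that the impulse magnitude $u$ is the input norm. I will also note that linearity of the flow deviation in $u$, which follows from Lemma~\ref{lem-flow-expressions}, makes the first-order effect exact. With this convention, the gain ratio reduces to the sensitivity energy, and both metrics equal $\mathrm{tr}(C\Pc^k_T C\T)$.
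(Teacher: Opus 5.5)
Your proposal is correct and follows essentially the same route as the paper: both compute the output energy of the impulse response as $u^2$ times the Gramian quadratic form $(\eb_i-\eb_j)\T\Qc^k(\eb_i-\eb_j)$, take $\Vert\ub\Vert_2^2=u^2$, and note that the ratio does not depend on $u$, so the supremum is trivial. The differences are cosmetic. The paper sets $x_0=0$ and works edge by edge with $y_l$, whereas you subtract the free response and aggregate over edges via $\mathrm{tr}(C\Qc^kC\T)$. The paper also uses the impulse-norm convention $\Vert\ub\Vert_2^2=u^2$ implicitly, while you state it explicitly.
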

 
\begin{proof}    
Let the impulse input given at time $t=0$ at node $k$ be $\ub = u\; \eb_k \delta(t)$. With $x_0=0$, for the continuous-time dynamics, $$y_l =\alpij (\eb_i\T-\eb_j\T)x\,= \alpij \;u\;(\eb_i\T-\eb_j\T)\,e^{At}\,\eb_k,$$
where the edge $i\rightarrow j$ be the $l^{\text{th}}-$edge in the edge set $\Ec_A$ and $\alpij$ is the flow constant. Next, we compute the expressions for the $\Jc_2$ norm with respect to the output $y_l$ as
\begin{align*}
\Vert y_l\Vert_2^2 &= \dspi_0^{{T}}y_l^2(t)\; \td t\\=&\,\alpij^2\, u^2(\eb_i\T-\eb_j\T)\dspi_0^{{T}} e^{At}\,\eb_k\,\eb_k\T e^{A\T t}\td t \,(\eb_i-\eb_j),\\
& = \alpij^2\, u^2({\Qc}^k_{ii}+{\Qc}^k_{jj}-2{\Qc}^k_{ij}) =\alpij^2\, u^2(\eb_i\T-\eb_j\T){\Qc}^k \,(\eb_i-\eb_j),
\end{align*}

where ${\Qc^k=\Pc^k_T=\dspi_0^{{T}} e^{At}\,\eb_k\,\eb_k\T \,e^{A\T t}\td t}$ is the controllability Gramian of the system $(A,\;\eb_k)$. The expression $\Vert y_{l}\Vert_2^2$, in addition to denoting the system $\Jc_2-$ performance metric, also denotes the effect of the input of magnitude $u$, applied at node $k$ on the edge $i\rightarrow j$.

The $2$-norm of the input is computed as
$$\Vert \ub \Vert_2^2= \dspi_0^{{T}}(u \, \delta t)^2\,\eb_k\T\,\eb_k \; \td t=u^2.$$
On computing the ratio of the output to the input as below, 
$$\frac{\Vert y_l\Vert_2^2 }{\Vert \ub\Vert_2^2 }=\frac{\Vert y_l\Vert_2^2 }{u^2}= \alpij^2 ({\Qc}^k_{ii}+{\Qc}^k_{jj}-2{\Qc}^k_{ij}),$$
and evaluating the supremum on both sides, we get
\begin{align*}
\sup_{\ub \neq 0}\;\frac{\Vert y_l\Vert_2^2 }{\Vert \ub\Vert_2^2 }&=\sup\;\alpij^2 ( {\Qc}^k_{ii}+{\Qc}^k_{jj}-2{\Qc}^k_{ij}),\\
&= \alpij^2 ( {\Qc}^k_{ii}+{\Qc}^k_{jj}-2{\Qc}^k_{ij}).
\end{align*}
Note that $\sup_{\ub \neq 0}\;\frac{\Vert y_l\Vert_2^2 }{\Vert \ub\Vert_2^2 }$ is the $\Jc_{\infty}$ performance metric of the network system.

Similarly for the discrete-time dynamics from Lemma~\ref{lem-flow-expressions}, for $x(0)=0$, we have $y_l = \alpij \;u\;(\eb_i\T-\eb_j\T)A^{t-1}\,\eb_k$ and
\begin{align*}
\Vert y_l\Vert_2^2 &=\alpij^2\, u^2(\eb_i\T-\eb_j\T)\sum_{t=0}^{{T}} A^t\eb_k\,\eb_k\T\, {A^T}\T\,(\eb_i-\eb_j).
\end{align*}
The rest of the proof follows the same path as the continuous-time case. Thus $\Jc_{\infty}$ performance is equal to the $\Jc_2$ performance with $u=1$. Thus, the $\Jc_2$ and $\Jc_{\infty}$ behaviors of the network system are identical. 
 \end{proof}

\subsection{Nodal Step Inputs}
Beyond the impulse inputs considered above, many other real-world applications experience nodal inputs that can generally be modeled as step inputs of a certain duration. To cater to these systems, we characterize here the relative vulnerability to step nodal inputs. {Consider the operation of system \eqref{eq-dynamics-whole} in a time interval $[0,\;T]$. The system receives a step input for the interval $[0,\; T_s]$, with $T_s\leq T$.   Let the step interval $[0,\;T_s]$ be divided into $N$  equal parts such that {$t_0=0$}, $t_N=T_s$, and let $\tau=T_s/N$.} At each time instant $t_0,\,t_1,\ldots,t_N$, consider an impulse input. Thus, we model the step input as a train of impulse inputs and subsequently use this representation to characterize its effect on a network using the controllability Gramian. {Note that for the discrete-time case, $t_p = p$, which implies $N=T_s$.}

\begin{theorem}\longthmtitle{First-order effect of impulse-train input for discrete-time dynamics}\label{thm-first-order-effects-step-discrete}
Consider the discrete-time network dynamics in \eqref{eq-dynamics-whole}, and let a train of impulse inputs of magnitude $u\in\real$ be applied at node $k$ for {$t = 0,\,1,\ldots,N$} {with $N\geq 2$ and $N\leq T$}. Then,  the following hold true at node $i$ and edge $i\rightarrow j$,
    \begin{enumerate}[leftmargin=0.5cm, label=(\roman*)]
\item $\mathlarger{\sum}_{t=1}^{{T}}\Big(\dfrac{\pd x_i}{\pd u}\Big)^2=\ebi\T\mathcal{Q}^k\,\ebi,$
\item $ \mathlarger{\sum}_{t=1}^{{T}}\Big(\dfrac{\pd F_{ij}}{\pd u}\Big)^2= \cij^2\,\ebij\T\mathcal{Q}^k\,\ebij,$
	 \end{enumerate}
where $\ebij=(\ebi-\ebj),\, $
and
\begin{multline*}
 \mathcal{Q}^k =\Pc_1^k + \mathlarger{\sum}_{t=2}^{N}\mathlarger{\sum}_{p=1}^{t-1}\,\Big(\Pc_t^k\,+A^{t-p}\Pc_p^k\,+\,\Pc_p^k{A^{t-p}}\T\Big) 
 \\
 +\Big(\mathlarger{\sum}_{p={0}}^{N}A^p\Big)\,{\Pc^k_{T-N+1}}\,\Big(\mathlarger{\sum}_{p={0}}^{N}A^p\Big)\T .   
\end{multline*}
\end{theorem}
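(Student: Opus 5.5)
Following the proof of Theorem~\ref{thm-first-order-effects-impulse}, I would first write the state explicitly, then differentiate in $u$, and then collect the quadratic terms into Gramians.

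\textbf{Step 1: sensitivity.} Let $S_t := \sum_{p=0}^{t-1} A^p$ and set $x_0=0$; the first-order sensitivity does not depend on $x_0$. Superposing the impulses applied at $t=0,1,\ldots,N$ with Lemma~\ref{lem-flow-expressions} gives, for $1\le t\le N$,
\[
x(t) = S_t\,\ebk\,u .
\]
For $t>N$ the whole train has entered, so
\[
x(t)=A^{t-N-1}\Big(\sum_{p=0}^{N}A^p\Big)\ebk\,u .
\]
The exact exponent offset depends on the convention for the impulse at $t=N$, and I would fix it to match the stated $\Pc^k_{T-N+1}$. Hence $\partial x_i/\partial u = \ebi\T v_t$, where $v_t$ is the corresponding vector. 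Then
\[
\sum_{t=1}^{T}\Big(\frac{\pd x_i}{\pd u}\Big)^2=\ebi\T\Big(\sum_{t=1}^{T} v_t v_t\T\Big)\ebi ,
\]
so (i) holds with $\mathcal{Q}^k=\sum_t v_tv_t\T$.

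\textbf{Step 2: deriving (ii) from (i).} Part (ii) follows exactly as in Theorem~\ref{thm-first-order-effects-impulse}, using
\[
\frac{\partial F_{ij}}{\partial u}=\cij\,\ebij\T v_t .
\]

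\textbf{Step 3: the tail $t>N$.} Write $M:=\sum_{p=0}^{N}A^p$. Reindexing with $s=t-N-1$ gives
\[
\sum_{t>N} v_tv_t\T = M\Big(\sum_{s} A^s\ebk\ebk\T {A^s}\T\Big)M\T ,
\]
and the inner sum is the finite Gramian $\Pc^k_{T-N+1}$, up to an index shift I would reconcile with the time convention. This gives the last term of $\mathcal{Q}^k$.

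\textbf{Step 4: the transient $1\le t\le N$, the main obstacle.} Here I need to expand
\[
S_tS_t\T=\sum_{p,q=0}^{t-1}A^p\ebk\ebk\T{A^q}\T .
\]
I would split the double sum into three parts: the diagonal $p=q$, the part $p>q$, and the part $p<q$.
\begin{itemize}
\item The diagonal terms sum to $\Pc^k_t$.
\item For $p>q$, factor $A^{p-q}$ out on the left, which produces $A^{p-q}\,A^q\ebk\ebk\T{A^q}\T$. Summing over $q$ with the difference $m=p-q$ held fixed builds partial Gramians $\Pc^k_{\cdot}$.
\item The $p<q$ part is the transpose of the $p>q$ part.
\end{itemize}
Rewriting these as $A^{t-p}\Pc^k_p$ and $\Pc^k_p{A^{t-p}}\T$ amounts to a telescoping identity of the form
\[
S_{t}S_t\T - S_{t-1}S_{t-1}\T = \ldots ,
\]
or equivalently to using $\Pc^k_{p+1}=A\Pc^k_pA\T+\ebk\ebk\T$. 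I would verify this identity by induction on $t$. After that, the double sum $\sum_{t=2}^{N}\sum_{p=1}^{t-1}$ and the separate $t=1$ term $\Pc^k_1=\ebk\ebk\T$ should assemble into the stated form.

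The delicate part is the index bookkeeping in this regrouping, including whether $\Pc^k_t$ appears once per $t$ or $t-1$ times as the formula's inner sum suggests. I would check the identity directly for $N=2$ and $N=3$ before writing the general induction.
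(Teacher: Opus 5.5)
Your decomposition is the same as the paper's: superpose the impulses, split $\sum_{t=1}^{T}$ at $t=N$, expand $S_tS_t\T$ into diagonal and off-diagonal parts, and pull $M=\sum_{p=0}^{N}A^p$ out of the tail. The gap is that you defer exactly the two index computations that decide the result. Your plan to ``fix the convention'' until they match the printed $\mathcal{Q}^k$ would fail, because carrying them out does not give the printed formula.

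In Step~4 no telescoping or induction is needed. With $m=p-q$ and $q=0,\dots,t-1-m$, the part $p>q$ equals $\sum_{m=1}^{t-1}A^m\Pc^k_{t-m}=\sum_{p=1}^{t-1}A^{t-p}\Pc^k_p$ directly, so
\[
S_t\ebk\ebk\T S_t\T=\Pc^k_t+\sum_{p=1}^{t-1}\Big(A^{t-p}\Pc^k_p+\Pc^k_p(A^{t-p})\T\Big).
\]
Here $\Pc^k_t$ appears \emph{once} per $t$, which is also the per-$t$ identity displayed in the paper's proof. The printed $\mathcal{Q}^k$ instead places $\Pc^k_t$ inside $\sum_{p=1}^{t-1}$, so it counts it $t-1$ times.

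In Step~3 there is no convention left to choose. With \eqref{eq-dynamics-discrete} and the range $t=1,\dots,T$ fixed, your own correct formula $x(t)=A^{t-N-1}M\ebk u$ for $t\ge N+1$ gives $s=t-N-1\in\{0,\dots,T-N-1\}$. That is $T-N$ terms, hence $M\Pc^k_{T-N}M\T$ (using $A^sM=MA^s$). The printed $\Pc^k_{T-N+1}$ adds the term $MA^{T-N}\ebk\ebk\T(A^{T-N})\T M\T$, which belongs to $t=T+1$ and lies outside the sum.

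These discrepancies are not cosmetic.
\begin{itemize}
\item Since $\Pc^k_t\succeq\ebk\ebk\T$ and the extra tail term is positive semidefinite, for $i=k$ the printed formula exceeds $\sum_{t=1}^{T}(\pd x_k/\pd u)^2$ by at least $\sum_{t=3}^{N}(t-2)\ge 1$, for every $A$, as soon as $N\ge 3$. The $N=3$ check you propose would expose this immediately.
\item For $N=T$, which the statement allows, the tail is empty but the printed formula keeps $M\ebk\ebk\T M\T$. For example, $A=0$ and $N=T=2$ give true value $2$ and printed value $3$.
\end{itemize}

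What your argument actually proves, and what you should state, is (i)--(ii) with
\[
\mathcal{Q}^k=\sum_{t=1}^{N}\Big[\Pc^k_t+\sum_{p=1}^{t-1}\Big(A^{t-p}\Pc^k_p+\Pc^k_p(A^{t-p})\T\Big)\Big]+M\Pc^k_{T-N}M\T .
\]
Part (ii) then follows as you say, since $\mathcal{Q}^k=\sum_t v_tv_t\T$ and $\pd F_{ij}/\pd u=\cij\ebij\T v_t$.

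The paper's proof follows your route and contains the same two slips: its tail reindexing produces ``$\sum_{t=0}^{T-N}$'', and its final assembly moves $\Pc^k_t$ into the inner sum. In Theorem~\ref{thm-line-network-step-discrete}, where $T\to\infty$, the tail slip disappears because both Gramians become $\Wc^k$, and the transient is used there in the corrected form $\sum_{t=1}^{N}\Pc^k_t$.
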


\begin{proof}
Consider the discrete-time dynamics in \eqref{eq-dynamics-whole}. We have $$\mathlarger{\sum}_{t=1}^{{T}}\Big(\dfrac{\pd x_i}{\pd u}\Big)^2=\mathlarger{\sum}_{t=1}^{N}\Big(\dfrac{\pd x_i}{\pd u}\Big)^2+\mathlarger{\sum}_{t=N+1}^{{T}}\Big(\dfrac{\pd x_i}{\pd u}\Big)^2$$ For $t\in \integernonneg$, with a train of impulse inputs of magnitude $u$ at node $k$ for {$t = 0,\,1,\ldots,N$}, the state evolution is given by~\cite{KO:10},
    $$x(t)=\begin{cases}
    A^t\,x_0+u\,\sum_{p=0}^{t-1} A^p\,\ebk,\quad t = 1,\,2,\ldots,N,\\
    A^t\,x_0+u\,\sum_{p={0}}^{N} A^{p+t-N-1}\,\ebk,\quad t = N+1,\,N+2,\ldots,{T}. 
    \end{cases}$$
Now for $t = 1,\ldots,N$ for any time $t$
    \begin{align*}
        \frac{\pd x_i}{\pd u}&=\ebi\T \frac{\pd x(t)}{\pd u}=\eb_i\T\sum_{p=0}^{t-1} A^p\ebk,\\
        \Big(\frac{\pd x_i}{\pd u}\Big)^2&=\eb_i\T\Big(\sum_{p=0}^{t-1}A^p\Big)\,\eb_k\,\eb_k\T\,\Big(\sum_{p=0}^{t-1}A^{p}\Big)\T\eb_i.
    \end{align*}
    By rewriting, $\mathlarger{\sum}_{p=0}^{t-1}A^{p}=I+\mathlarger{\sum}_{p=1}^{t-1}A^{p}$ we obtain
    \begin{align*}
        \Big(\frac{\pd x_i}{\pd u}\Big)^2&=\ebi\T\Big(\sum_{r=0}^{t-1} A^r\ebk\ebk\T{A^r}\T+ \sum_{p=1}^{t-1}\Big(\sum_{r=0}^{p-1}A^r\ebk\ebk\T{A^r}\T\Big)\,{A^{t-p}}\T+\\&\sum_{p=1}^{t-1}{A^{t-p}}\Big(\sum_{r=0}^{p-1}A^r\ebk\ebk\T{A^r}\T\Big)\,\Big)\,\ebi,\\
         &=\ebi\T\Big(\Pc_t^k+ \sum_{p=1}^{t-1}{A^{t-p}}\,\Pc_p^k\,+\sum_{p=1}^{t-1}\Pc_p^k\,{A^{t-p}}\T\Big)\,\ebi.
    \end{align*}
Next, for $t=N+1,\ldots,{T}$ we have $\frac{\pd x_i}{\pd u}=\eb_i\T\sum_{p={1}}^N A^{p+t-N-1}\ebk$, 
    $$\Big(\frac{\pd x_i}{\pd u}\Big)^2=\ebi\T\Big(\mathlarger{\sum}_{p={0}}^{N}A^p\Big)A^{t-N-1}\ebk\ebk\T{A^{t-N-1}}\T\Big(\mathlarger{\sum}_{p={0}}^{N}A^p\Big)\T\ebi.$$
\noindent Shifting $t$ by $N+1$ steps and taking the summation gives us
     \begin{align*}         
\mathlarger{\sum}_{t=N+1}^{{T}}\Big(\frac{\pd x_i}{\pd u}\Big)^2&=\ebi\T\Big(\mathlarger{\sum}_{p=0}^{N}A^p\Big)\mathlarger{\sum}_{t=0}^{{T-N}}A^t\ebk\ebk\T{A^t}\T\Big(\mathlarger{\sum}_{p=0}^{N}A^p\Big)\T\ebi,\\
&=\ebi\T\Big(\mathlarger{\sum}_{p={0}}^{N}A^p\Big){\Pc^k_{T-N+1}}\Big(\mathlarger{\sum}_{p={0}}^{N}A^p\Big)\T\ebi.
\end{align*}
Finally, computing the sum gives us the necessary result for {\emph(i)}. For computing the first-order effect on edge flows, we recall from Theorem \ref{thm-first-order-effects-impulse} that
    \begin{align*}\Big(\frac{\pd F_{ij}}{\pd u}\Big)^2=\alpij^2\Bigg(\Big(\frac{\pd x_i}{\pd u}\Big)^2+\Big(\frac{\pd x_j}{\pd u}\Big)^2-2\frac{\pd x_i}{\pd u}\;\frac{\pd x_j}{\pd u}\Bigg),
	\end{align*} 	
and from above, we have
    \begin{align*}
        \frac{\pd x_i}{\pd u}\,\frac{\pd x_j}{\pd u}=\begin{cases}
            \ebi\T\Big(\mathlarger{\sum}_{p=0}^{t-1}A^p\Big)\ebk\,\ebk\T\Big(\mathlarger{\sum}_{p=0}^{t-1}A^{p}\Big)\T\ebj,\; t = 1,\,2,\ldots,N,\\
            \ebi\T\Big(\mathlarger{\sum}_{p={1}}^{N}A^p\Big)A^{t-N-1}\ebk\ebk\T{A^{t-N-1}}\T\Big(\mathlarger{\sum}_{p={1}}^{N}A^p\Big)\T\ebj,\\
            \hspace{3.2cm}t = N+1,\,N+2,\ldots,{T}.
        \end{cases}
    \end{align*}
Proceeding as in \emph{(i)}, using $\ebij = \ebi-\ebj$, and taking the summation over $t=1$ to ${T}$ gives the required result for \emph{(ii)}.
    \end{proof}

Next, we present the results for continuous-time dynamics with a finite-time-step input, modeled as a train of impulses.

\begin{theorem}\longthmtitle{First-order effect of an impulse-train input for continuous-time dynamics}\label{thm-first-order-effects-step-continuous}
Consider the continuous-time network dynamics in \eqref{eq-dynamics-whole}. Let a train of impulse inputs of magnitude $u\in\real$ be applied at node $k$ at $t = t_0,\, t_1,\ldots{,t_N=T_s}$ such that $t_{q+1}-t_q = \tau$ $\forall$ $q = 0,\,1,\ldots,N-1.$  Then  we have the following for the node $i$ and the edge $i\rightarrow j$ with $t_0=0$,
    \begin{enumerate}[leftmargin=0.5cm, label=(\roman*)]
			\item 
			 $\dspi_{0}^{{T}}\bigg(\frac{\pd x_i}{\pd u}\bigg)^2 \textup{d}t=
       \ebi\T\,\mathcal{Q}^k\,\ebi,$
\item $ \displaystyle\int_{0}^{{T}}\Big(\frac{\pd F_{ij}}{\pd u}\Big)^2\,\textup{d}t=\cij^2\,\ebij\T\,\mathcal{Q}^k\,\ebij,$
	 \end{enumerate}
     where 
     $\ebij = \ebi-\ebj$,  $\Phi_q = \mathlarger{\sum}_{p=0}^{q}e^{pA\T\tau}$, and 
          $\mathcal{Q}^k=\mathlarger{\sum}_{q=0}^{N-1}\Phi_q\T\,\Pc^k_{\tau}\,\Phi_q+\Phi_N\T\,{\Pc^k_{T-T_s}}\,\Phi_N$.
      \end{theorem}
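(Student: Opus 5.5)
We follow the same strategy as the proof of Theorem~\ref{thm-first-order-effects-impulse} and the discrete-time impulse-train result in Theorem~\ref{thm-first-order-effects-step-discrete}. First we write the state trajectory under the impulse train via superposition. On the interval $t\in[t_q,t_{q+1})$ for $q=0,\ldots,N-1$, the impulses applied at $t_0,\ldots,t_q$ have acted, so
\[
x(t)=e^{At}x_0+u\sum_{p=0}^{q}e^{A(t-t_p)}\ebk .
\]
On $[t_N,T]$ all $N+1$ impulses have acted. Setting $s=t-t_q\in[0,\tau)$ and using $t-t_p=s+(q-p)\tau$, we factor
\[
\sum_{p=0}^{q}e^{A(t-t_p)}=\Big(\sum_{p=0}^{q}e^{pA\tau}\Big)e^{As}=\Phi_q\T e^{As},
\]
because $\Phi_q\T=\sum_{p=0}^q e^{pA\tau}$, and the sum commutes with $e^{As}$. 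Differentiating in $u$ then gives $\frac{\pd x_i}{\pd u}=\ebi\T\Phi_q\T e^{As}\ebk$ on the $q$-th subinterval. On the final interval, with $s=t-T_s\in[0,T-T_s]$, the same formula holds with $\Phi_N$ in place of $\Phi_q$.

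Next we split $\int_0^T$ into $\sum_{q=0}^{N-1}\int_{t_q}^{t_{q+1}}+\int_{T_s}^{T}$ and change variables to $s$ in each piece. Squaring the derivative as in the proof of Theorem~\ref{thm-first-order-effects-impulse} gives
\[
\int_{t_q}^{t_{q+1}}\Big(\frac{\pd x_i}{\pd u}\Big)^2\td t=\ebi\T\Phi_q\T\Big(\int_0^\tau e^{As}\ebk\ebk\T e^{A\T s}\td s\Big)\Phi_q\ebi=\ebi\T\Phi_q\T\Pc^k_\tau\Phi_q\ebi .
\]
The last piece gives $\ebi\T\Phi_N\T\Pc^k_{T-T_s}\Phi_N\ebi$ in the same way. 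Summing the pieces yields (i) with the stated $\mathcal{Q}^k$. For (ii), we write $\frac{\pd F_{ij}}{\pd u}=\cij\,\ebij\T\frac{\pd x}{\pd u}$ and repeat the same computation with $\ebi$ replaced by $\ebij$. Equivalently, we expand the square into the $ii$, $jj$ and $ij$ terms, exactly as in Theorem~\ref{thm-first-order-effects-impulse}(ii). This gives $\cij^2\ebij\T\mathcal{Q}^k\ebij$.

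The main obstacle is bookkeeping rather than any analytic difficulty. Two points need care. The first is the convention at the impulse instants, i.e., which impulses are counted on each half-open subinterval; since these are measure-zero points, they do not affect the integrals. The second is matching the transpose placement in $\Phi_q$ so that $\Phi_q\T\Pc^k_\tau\Phi_q$, rather than $\Phi_q\Pc^k_\tau\Phi_q\T$, appears. We must also check that no stability assumption is needed, which holds because all integrals are over finite intervals. Finally, the statement's $\Phi_q$ uses $e^{pA\T\tau}$, so the factorization $\sum_p e^{A(s+(q-p)\tau)}=\big(\sum_{r=0}^{q}e^{rA\tau}\big)e^{As}$ must be reindexed with $r=q-p$ and transposed consistently.
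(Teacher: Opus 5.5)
Your proposal is correct and follows essentially the same route as the paper. Both use superposition of the impulse responses on each subinterval $[t_q,t_{q+1})$ and on $[T_s,T]$, then the shift $t=t_q+\theta$ (resp.\ $t=T_s+\theta$) to factor out $\Phi_q\T$ and recover $\Pc^k_{\tau}$ (resp.\ $\Pc^k_{T-T_s}$), and obtain part (ii) from the same bilinear computation with $\ebi$ replaced by $\ebij$. Your explicit reindexing $r=q-p$ and the integration limit $T-T_s$ on the final piece are, if anything, stated more carefully than in the paper's own write-up.
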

\begin{proof}
Consider the continuous-time dynamics in \eqref{eq-dynamics-whole}. We have $$\dspi_{0}^{{T}}\bigg(\frac{\pd x_i}{\pd u}\bigg)^2 \textup{d}t=\dspi_{0}^{t_N}\bigg(\frac{\pd x_i}{\pd u}\bigg)^2 \textup{d}t+\dspi_{t_N}^{{T}}\bigg(\frac{\pd x_i}{\pd u}\bigg)^2 \textup{d}t.$$
   \emph{(i)} The instantaneous state for a train of impulse inputs of magnitude $u$ applied at node $k$ with $t_0=0$ is  \cite{KO:10},
    \begin{align*}
        x(t) =\begin{cases}
        e^{At}x(0)+u\mathlarger{\sum}_{p=0}^{q}e^{A(t-t_p)}\ebk,\\
        \hspace{2cm} t_q\leq t < t_{q+1}, \quad q = 0,\,1,\ldots,N-1,\\
        e^{At}x(0)+u\mathlarger{\sum}_{p=0}^{N}e^{A(t-t_p)}\ebk, \quad t_N\leq t\leq T
        \end{cases} 
    \end{align*}
   Now for $q=1,\,2,\ldots,N-1$, we have
      \begin{align*}
        \frac{\pd x_i}{\pd u}&=\ebi\T \frac{\pd x(t)}{\pd u}=\eb_i\T \sum_{p=0}^{q}e^{A(t-t_p)}\ebk,\quad t_q\leq t < t_{q+1},\\
        \Big(\frac{\pd x_i}{\pd u}\Big)^2&=\eb_i\T\Big(\sum_{p=0}^{q}e^{A(t-t_p)}\Big)\eb_k\,\eb_k\T\Big(\sum_{p=0}^{q}e^{A(t-t_p)}\Big)\T\eb_i.
    \end{align*}
Integrating the overall effect from $t_0=0$ to $t_N$, we have
    \begin{align*}            
       &\dspi_{0}^{t_N}\bigg(\frac{\pd x_i}{\pd u}\bigg)^2 \textup{d}t\\        &=\sum_{q=0}^{N-1}\dspi_{t_q}^{t_{q+1}}\eb_i\T\bigg(\sum_{p=0}^{q}e^{A(t-t_p)}\bigg)\,\eb_k\,\eb_k\T\bigg(\sum_{p=0}^{q}e^{A(t-t_p)}\bigg)\T\eb_i\; \textup{d}t.
    \end{align*}    
    With substitutions $t=t_q+\theta,\,\tau = t_{q+1}-t_q,\, t_q-t_p=(q-p)\tau$,\\ $ \mathlarger{\sum}_{p=0}^{q}e^{A\T(t_q-t_p)}=\mathlarger{\sum}_{p=0}^{q}e^{pA\T\tau}=\Phi_q$ and changing limits of integration, we have
    \begin{align*}            
       &\dspi_{0}^{t_N}\bigg(\frac{\pd x_i}{\pd u}\bigg)^2\textup{d}t =\sum_{q=0}^{N-1}\ebi\T\Phi_q\T\dspi_{0}^{\tau}e^{A\theta}\,\eb_k\,\eb_k\T\,e^{A\T\theta} \textup{d}\theta\;\Phi_q\,\ebi.
    \end{align*}
Using  the definition of $\Pc^k_{\tau}$, we get 
$$\dspi_{0}^{t_N}\bigg(\frac{\pd x_i}{\pd u}\bigg)^2 \textup{d}t
     =\sum_{q=0}^{N-1}\ebi\T\Phi_q\T\,\Pc^k_{\tau}\,\Phi_q\,\ebi.$$
Next, for { $t_N\leq t \leq T$ } we have
\begin{align*}
\frac{\pd x_i}{\pd u}&=\eb_i\T \sum_{p=0}^{N}e^{A(t-t_p)}\,\ebk\,\, \textup{and}\\
\dspi_{t_N}^{{T}}\Big(\frac{\pd x_i}{\pd u}\Big)^2\!\td t\!&=\dspi_{t_N}^{{T}}\eb_i\T\Big(\sum_{p=0}^{N}e^{A(t-t_p)}\Big)\eb_k\,\eb_k\T\Big(\sum_{p=0}^{N}e^{A(t-t_p)}\Big)\T\eb_i\,\td t.
\end{align*}
Substituting $t = t_N+\theta$, changing limits of integration and with $t-t_p = t_N-t_p+\theta=(N-p)\tau+\theta$, we have
\begin{align*}
&\dspi_{t_N}^{{T}}\Big(\frac{\pd x_i}{\pd u}\Big)^2\!\td t=\eb_i\T\Big(\sum_{p=0}^{N}e^{pA\tau}\Big)\dspi_{0}^{{T}}e^{A\theta}\eb_k\,\eb_k\T e^{A\T \theta}\,\td\theta\Big(\sum_{p=0}^{N}e^{pA\T \tau}\Big)\eb_i.
\end{align*}
Using $t_N=T_s$, the definitions of $\Phi_N$ and $\Pc^k_{T-T_s}$, and summing the overall effect for $0\leq t\leq T$, we get the required result. 

\noindent\emph{(ii)} The result follows by proceeding as in the proof of Theorem \ref{thm-first-order-effects-step-discrete} and using \emph{(i)}.
\end{proof}

\begin{remark}\longthmtitle{Relative first-order effects for step inputs}\label{re:relative}
{\rm
Analogous to the results in Corollary~\ref{corollary-first-order-effects}, we can obtain expressions for relative first-order effects in response to step inputs (modeled as a train of impulses), with the absolute flow $F_{ij}$ relative to the steady-state flow $\Fb_{ij}$.} \oprocend
\end{remark} 

\subsection{Vulnerability Matrix}
We introduce here the notion of Vulnerability Matrix. Let $i\rightarrow j$ be the $l^\text{th}$ edge in the set $\Ec_A$ and let an input $u$ be given at node~$k$ of the network. Let the matrix $\VM\in \real^{n\times n_e}$ be defined for discrete-time/continuous-time dynamics as follows,
\begin{align}\label{eq:Vulnerability-Matrix}
    \sum_{t=0}^{{T}}\Big(\dfrac{\pd F_{ij}}{\pd u}\Big)^2=:\VM_{kl}\coloneqq\displaystyle\int_{0}^{{T}}\Big(\frac{\pd F_{ij}}{\pd u}\Big)^2\,\textup{d}t.
\end{align}
This matrix is referred to as the `\textbf{Vulnerability Matrix}' (VM) or the `Absolute Vulnerability Matrix'. The $k^\text{th}$ row of VM captures the effect of an input at node $k$ on the edge flow in each edge of the network. Instead, the $l^\text{th}$ column in VM represents the effect of an input (applied one at a time) at each node of the network on the $l^\text{th}$ network edge. The sum $\sum_{q=1}^{n_e}\VM_{kl}$ of the $k^\text{th}$ row elements encodes the influence of the $k^\text{th}$ input on all the edge flows. This is referred to as the \textbf{vulnerability influence} of node $k$ and denoted by~$\nu_k$. Higher values of $\nu_k$ make node $k$ a potential target for nodal attacks (viewed through an adversarial lens). An alternative notion (not explored here) is to define the vulnerability of a particular edge under input attacks at all nodes by summing the columns of VM, i.e., $\sum_{k=1}^{n}\VM_{k\,l}$.

A refinement of the notion of VM is the `{Relative Vulnerability Matrix}' or `{Normalized Vulnerability Matrix}', which relates to the intuition developed in Corollary~\ref{corollary-first-order-effects} and Remark~\ref{re:relative}. Here, the flows are normalized to the system's steady-state flows. This is often a critical metric, as absolute flows or changes in them without a baseline reference (e.g., steady-state flows) can lead to incorrect conclusions. While the definition~\eqref{eq:Vulnerability-Matrix} makes sense as a way of encoding network vulnerability, it offers little in terms of its actual evaluation. This is where the results presented above are consequential, since they offer an explicit, exact way to compute first-order effects without subjecting the system model to any external inputs. From \cite{FP-SZ-FB:14, THS-FLC-JL:16}, the controllability Gramian is related to the average controllability of the system when computed along all directions of the state space. This also corresponds to the energy contained in the output response to a unit impulse input \cite{KZ-JD-KG:95, PVC-JC:22}. Thus, Theorem~\ref{thm-first-order-effects-impulse} presents an energy-based expression in terms of the controllability Gramian, which quantifies the effect of impulse nodal inputs on edge flows. Similarly, Theorems~\ref{thm-first-order-effects-step-discrete} and \ref{thm-first-order-effects-step-continuous} quantify the effect of a train of impulse inputs on edge flows, expressed in terms of the controllability Gramians.
Furthermore, these expressions allow us to identify system vulnerabilities and strengths computationally. From a more conceptual perspective, this allows us to characterize these explicit expressions and their dependency on a variety of network properties. We tackle this in the next section for directed line networks.

\section{Analysis of directed line networks}
In the previous section, we derived closed-form expressions for analyzing the effect of nodal inputs on edge flows for general networks.\footnote{Here, we consider $T\to \infty$ and focus our attention on a special class of networks, i.e., directed line networks with a stable $A$ matrix (i.e., $\srad(A) <1$ or $\mathcal{R}\big(\lambda_i(A)\big)<0$, for all $\forall\,i\in\{1,\,2,\ldots,n\}$.}
This allows us to characterize the first-order effects in terms of network edge weights for nodal inputs. The treatment here generalizes to arbitrary edges in our conference paper~\cite{PVC-BKP-JC:2023}, which focused on the effect of impulse inputs on edges adjacent to the input node. Further, we also present results for step inputs.

 \begin{figure}[htb]
	\centering
		\vspace{-0.1in}
	\begin{tikzpicture}[line width=0.4mm]
		\node[shape=circle,draw=black, fill=lightgray, line width=0.4mm] (no1) at (0,0){$\bf 1$};
		\node[shape=circle,draw=black,fill=lightgray,line width=0.4mm] (no2) at (1.5,0){$\bf 2$};
		\node[shape=circle,draw=black,fill=lightgray, line width=0.4mm] (no3) at (3,0){$\bf 3$};
		\node[] (no4) at (4.5,0){};
		\node[] (no4a) at (4.125,0){};
		\node[shape=circle,draw=black,fill=lightgray,line width=0.4mm] (no5) at (6,0){$\bf n$};
		\draw[-to,line width = 0.5mm](no1)--node[below]{$a_{21}$}(no2);
		\draw[-to,line width = 0.5mm](no2)--(no3);
		\draw[-to,line width = 0.5mm](no3)--node[below]{$a_{43}$}(no4);
		\draw[dashed,-to,line width = 0.5mm](no4a)--(no5);
		\draw[->,line width = 0.5mm](no1) edge[loop below] node[below]{$a_{11}$}(no1);
		\draw[->,line width = 0.5mm](no2) edge[loop below] node[below]{$a_{22}$}(no2);
		\draw[->,line width = 0.5mm](no3) edge[loop below] node[below]{$a_{33}$}(no3);
		\draw[->,line width = 0.5mm](no5) edge[loop below] node[below]{$a_{nn}$}(no5);
	\end{tikzpicture}
	\caption{A directed line network with $n$ nodes.}\label{fig-directed-line}
\end{figure} 

Consider a directed line network, i.e., a sequential arrangement of nodes, starting at node $1$ and ending at node $n$ (see Fig.~\ref{fig-directed-line}). The presence of self-loops ensures the stability of the continuous-time network dynamics. We define the notation $\rho_{ij}$ as $\rho_{ii} = 1$ and for $i<j$, 
\begin{equation}
\label{note1}
\rho_{ij}\coloneqq a_{i+1,\,i}\,\,a_{i+2,\,i+1} \ldots a_{j,\,j-1} = \mathlarger{\prod}_{r=i}^{j-1}a_{r+1,r}.
\end{equation}
For $i\leq k\leq j$, we have $\rho_{ij}=\rho_{ik}\,\rho_{kj}$.

\subsection{Discrete-Time Directed Line Network}
Consider the discrete-time dynamics in \eqref{eq-dynamics-discrete}, with $A \in\real^{n\times n}$ representing the weighted adjacency matrix. Next, consider a special class of directed line networks as in Fig.~\ref{fig-directed-line} with no self-loops (i.e., $a_{ii}=0\,\, \forall\, \,i\in\{1,\ldots,n\}$) and positive edge weights (i.e., $a_{ji}>0$). For a directed line network, the infinite-time horizon controllability  Gramian $\Wc^k$ and finite-time controllability Gramian $\Pc^k_t$ with the input node at $k$ are diagonal in nature. We have 
 \begin{align}
     \label{IFT-gramian-discrete-line}
     \Wc^k_{rr} = \begin{cases}
          0, \quad &\text{for} \quad r<k,\\
          \rho^2_{kr}, \quad &\text{for} \quad k\leq r\leq n.
     \end{cases}     
 \end{align}
 The finite-time horizon controllability Gramian is as follows for $t\in\integernonneg,\;1\leq t\leq n-k+1$,
  \begin{align}
     \label{FT-gramian-discrete-line}
     [\Pc^k_t]_{rr} = \begin{cases}
         \rho^2_{kr}, \quad &\text{for} \quad k\leq r\leq k+t-1,\\
         0, \quad &\text{otherwise}.
     \end{cases}     
 \end{align}
 Note that for $t\geq n-k+1$ we have $\Pc^k_t=\Wc^k.$ 
 For any $r\in\integernonneg$, we examine the mathematical operation of multiplying $A^r$ and ${A^r}\T$ with the vector $\eb_s$. Note that $A^r\,\eb_s$ is a vector with exactly one nonzero element for $0\leq r \leq n-s$  and $A^r\,\eb_s = 0$ when $r>n-s$. The nonzero element is at the $(s+r)^{\text{th}}$ coordinate whose value is  $[A^r\,\eb_s]_{s+r}=\rho_{s,s+r}$ for $0\leq r\leq n-s$. Similarly ${A^r}\T\eb_s$ is a vector with exactly one nonzero element for $0\leq r \leq s-1$  and ${A^r}\T\eb_s = 0$ when $r\geq s$. The nonzero element is at the $(s-r)^{\text{th}}$ coordinate whose value is  $[{A^r}\T\eb_s]_{s-r}=\rho_{s-r,s}$ for $0\leq r\leq s-1$.  
 
\begin{theorem}\longthmtitle{Impulse-train input effect in directed line networks for discrete-time dynamics}\label{thm-line-network-step-discrete}
Consider the discrete-time line network dynamics in \eqref{eq-dynamics-whole}, and let a train of impulse inputs of magnitude $u\in\real$ be applied at node $k$ at $t = 0,\,1,\,2,\ldots,N$, with $N>2$. Then  we have the following for the node $i$ and the edge $i\rightarrow j$ with $j=i+1$,
    \begin{enumerate}[leftmargin=0.5cm, label=(\roman*)]
\item $\mathlarger{\sum}_{t=1}^{\infty}\Big(\dfrac{\pd x_i}{\pd u}\Big)^2= \begin{cases}
0, \qquad \text{for}\quad i<k,\\
\big(i+1+\max(N+k-i,0)\\-\max(k,i-N)\big)\,\rho_{ki}^2,\quad\text{for}\quad i\geq k.
\end{cases}$
\item $ \mathlarger{\sum}_{t=1}^{\infty}\Big(\dfrac{\pd \widetilde{F}_{ij}}{\pd u}\Big)^2=\begin{cases}
    0, \quad &\text{for}\quad i<k-1,\\
     N+1, \quad &\text{for}\quad i=k-1,\\   
     \Tc_1+\Tc_2-2\Tc_3-2\Tc_4 &\text{for}\quad i>k.
\end{cases} $
	 \end{enumerate}
     where $\widetilde{F}_{ij}=F_{ij}/\alpij$,\\ $\Tc_1=\big(i+1+\max(N+k-i,0)-\max(k,i-N)\big)\,\rho_{ki}^2,$\\
     $\Tc_2=\big(i+2+\max(N+k-i-1,0)-\max(k,i+1-N)\big)\,\rho_{k,i+1}^2,$\\
     $\Tc_3=\begin{cases}
         0, \quad &\text{for} \quad N<i-k+2,\\
         (N+k-i-1)\,\rho^2_{ki}\,\rho_{i,i+1}\quad &\text{for} \quad N\geq i-k+2,
     \end{cases}$\\
     $\Tc_4=(i+1-\max(i-N+1,k))\rho^2_{ki}\,\rho_{i,i+1}.$
\end{theorem}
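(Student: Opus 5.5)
We will not expand the general Gramian expression $\mathcal{Q}^k$ of Theorem~\ref{thm-first-order-effects-step-discrete}. Instead we work directly with the sensitivity $\frac{\pd x(t)}{\pd u}$ and use the shift structure of $A$ described above: $A^r\eb_k$ has a single nonzero entry $\rho_{k,k+r}$, located at coordinate $k+r$, and vanishes for $r>n-k$. From the state-evolution formula in the proof of Theorem~\ref{thm-first-order-effects-step-discrete}, for every $t\ge1$
\[
\frac{\pd x_i}{\pd u}(t)=\sum_{p} \eb_i\T A^{p}\eb_k ,
\]
where $p$ ranges over the admissible window. For $t\le N$ this is $0\le p\le t-1$; for $t>N$ it is $t-N-1\le p\le t-1$. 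The term $\eb_i\T A^p\eb_k$ is nonzero only when $p=i-k$, and then it equals $\rho_{ki}$. Hence $\frac{\pd x_i}{\pd u}(t)\in\{0,\rho_{ki}\}$, and it equals $\rho_{ki}$ exactly when $i-k$ lies in the window. In particular it vanishes identically for $i<k$, which gives the first case of (i).

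For $i\ge k$, part (i) therefore reduces to counting the integers $t\ge1$ with $t-N-1\le i-k\le t-1$, where the lower constraint is inactive when $t\le N$. This is the interval $i-k+1\le t\le i-k+N+1$, intersected with $t\ge1$. The count is then multiplied by $\rho_{ki}^2$. We will write the count in the $\max$ form of the statement. We will do the case bookkeeping ($N$ compared with $i-k$) explicitly and check it against the stated $i+1+\max(N+k-i,0)-\max(k,i-N)$. We expect some care here, because of the boundary convention at $t=N$ versus $t=N+1$ and the indexing of the train $t=0,\dots,N$ in the paper's formula.

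For (ii), write $\frac{\pd\widetilde F_{ij}}{\pd u}=\frac{\pd x_i}{\pd u}-\frac{\pd x_{i+1}}{\pd u}$ and expand the square. The summed squares give $\Tc_1$ and $\Tc_2$ directly from (i), applied at $i$ and at $i+1$; the latter replaces $\rho_{ki}$ by $\rho_{k,i+1}=\rho_{ki}\rho_{i,i+1}$. The cross term $\sum_t \frac{\pd x_i}{\pd u}\frac{\pd x_{i+1}}{\pd u}$ equals $\rho_{ki}\rho_{k,i+1}=\rho_{ki}^2\rho_{i,i+1}$ times the number of $t$ at which both indicator windows contain their respective offsets $i-k$ and $i+1-k$. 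We will split this count at $t=N$: the $t\le N$ portion (the window $[0,t-1]$) produces $\Tc_3$, which is nonzero only when $N\ge i-k+2$, and the $t>N$ portion (the sliding window) produces $\Tc_4$. The remaining cases follow quickly. For $i<k-1$ both terms vanish. For $i=k-1$ only $x_{k}$ contributes, with $\rho_{kk}=1$, and it is nonzero for $N+1$ values of $t$; this gives $N+1$.

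The main obstacle is the overlap count in the cross term: determining the intersection of two sliding windows, offset by one, and splitting it cleanly into the pre-$N$ and post-$N$ regimes with the paper's $\max$ expressions. We will handle it by listing the $t$-intervals explicitly in each regime and verifying the formula on small cases (for example $N=3$, $k=1$, $i=2,3$).
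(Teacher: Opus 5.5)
Your proposal is correct, but it takes a different route from the paper.

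\textbf{The paper's route.} The paper specializes the Gramian expression $\Qc^k$ of Theorem~\ref{thm-first-order-effects-step-discrete}. It evaluates $\ebi\T\Qc^k\ebi$ and $\ebi\T\Qc^k\eb_{i+1}$ block by block, using two facts: $\Pc^k_t$ and $\Wc^k$ are diagonal, and $A^r$, ${A^r}\T$ act as weighted shifts. For (i), the pre-$N$ diagonal part gives $\max(N+k-i,0)\rho_{ki}^2$, and the tail block $(\sum_{p=0}^N A^p)\Wc^k(\sum_{p=0}^N A^p)\T$ gives $(i+1-\max(i-N,k))\rho_{ki}^2$. For the edge, only two cross contributions survive: $\Pc^k_p{A^{t-p}}\T$, giving $\Tc_3$, and the tail block, giving $\Tc_4$.

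\textbf{Your route.} You bypass the Gramian entirely. You count the support of the scalar response $\ebi\T A^p\ebk$, which is nonzero only at $p=i-k$ (where it equals $\rho_{ki}$), over the sliding input window. Your split at $t=N$ matches the paper's block split exactly:
\begin{itemize}
\item the pre-$N$ overlap count is $\max(N+k-i-1,0)$, which is the coefficient in $\Tc_3$;
\item the post-$N$ count is $\min(N,i-k+1)=i+1-\max(i-N+1,k)$, which is the coefficient in $\Tc_4$.
\end{itemize}

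\textbf{What your route buys.} It is shorter, and it does not rely on the printed general formula for $\Qc^k$. That formula puts $\Pc^k_t$ inside the inner sum over $p$, whereas the paper's proof of this theorem actually uses $\sum_{t=1}^N\Pc^k_t$. Your counting also makes plain that the max expressions collapse:
\begin{itemize}
\item the count in (i) is $N+1$ in every case, so $\Tc_1=(N+1)\rho_{ki}^2$ and $\Tc_2=(N+1)\rho_{k,i+1}^2$;
\item $\Tc_3+\Tc_4=N\rho_{ki}^2\rho_{i,i+1}$.
\end{itemize}
It also handles $i=k$, which the statement's case split leaves out.

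\textbf{What the paper's route buys.} It presents the result as a particularization of the Gramian formula, which is the organizing theme of the paper.

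\textbf{For the write-up.} State the counts $N+1$ and $N$ explicitly. Then verify the max identities on the cases $i-k\le N$ versus $i-k>N$ for (i), and $i-k\le N-2$ versus $i-k\ge N-1$ for the cross term. The $t=N$ versus $t=N+1$ boundary you flag is harmless, because both branches of the state formula give $\sum_{p=0}^{N}A^p\ebk$ at $t=N+1$.
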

    
\begin{proof}
We will treat each case separately. For {\emph{(i)}}, we use the results derived in Theorem \ref{thm-first-order-effects-step-discrete}.
\begin{enumerate}[leftmargin=0.5cm, wide, labelwidth=!, labelindent=0pt, label=(\Alph*)]
\item The first term of ${\sum}_{t=1}^{\infty}\Big(\tfrac{\pd x_i}{\pd u}\Big)^2$ equals $\ebi\T\sum_{t=1}^{N}\Pc^k_t\;\ebi$ i.e., the $i^{\text{th}}$ diagonal element of $\sum_{t=1}^{N}\Pc^k_t$. Now  $\ebi\T\sum_{t=1}^{N}\Pc^k_t\;\ebi=0$ for $N\leq i-k$ or $i<k$. Whereas, for $i\geq k$ and $N\geq i-k+1$, we have $\ebi\T\sum_{t=1}^{N}\,\Pc^k_t\;\ebi=\max(N+k-i,0)\,\rho^2_{ki}$.

\item The second term is $\ebi\T{\sum}_{t=2}^{N}{\sum}_{p=1}^{t-1} (A^{t-p}\Pc_p^k+\Pc_p^k{A^{t-p}}\T)\;\ebi$, furthermore, $\ebi\T A^{t-p}\,\Pc_p^k\;\ebi = \ebi\T\,\Pc_p^k{A^{t-p}}\T\;\ebi$. Note that $\ebi\T A^{t-p}=0$ for $t-p\geq i$. For $1\leq t-p\leq i-1$, $\ebi\T A^{t-p}$ is a row vector with only one nonzero element at the $(i-t+p)^{\text{th}}$ position. Meanwhile, $\Pc^k_p\,\ebi =0$ for $1\leq i < k$ or $1\leq p \leq i-k$. For $p\geq i-k+1 $ and $i\geq k$, $\Pc^k_p\,\ebi$   is vector with one nonzero element at the $i^{\text{th}}$ position. Since, $t-p\geq 1$, we have $i-t+p \neq i$, which implies the sum is always $\ebi\T{\sum}_{t=2}^{N}{\sum}_{p=1}^{t-1}\,(A^{t-p}\Pc_p^k+\Pc_p^k{A^{t-p}}\T)\;\ebi=0$.

\item Consider the third term $\ebi\T \Big({\sum}_{p={0}}^{N}A^p\Big)\,\Wc^k\,\Big({\sum}_{p={0}}^{N}A^p\Big)\T\, \ebi$. Since ${A^p}\T\ebi = 0$ for $p\geq i$, the nonzero elements of the vector $[({\sum}_{p={0}}^{N}A^p)\T\, \ebi]_s = \rho_{si}$ with $\max(i-N,1)\leq s\leq i$. 
Observe that, $\Wc^k({\sum}_{p={0}}^{N}A^p)\T\, \ebi=0$ for $i<k$. For $i\geq k$ we have $[\Wc^k({\sum}_{p={0}}^{N}A^p)\T\, \ebi]_s = \rho^2_{ks}\rho_{si}= \rho_{ks}\,\rho_{ki}$ with $\max(i-N,k)\leq s\leq i$. Note that, the common nonzero positions of $\ebi\T({\sum}_{p={0}}^{N}A^p)$ and $\Wc^k({\sum}_{p={0}}^{N}A^p)\T\, \ebi$ are between $\max(i-N,k)$ and $i$. Therefore, $\ebi\T\, ({\sum}_{p={0}}^{N}A^p)\,\Wc^k\,({\sum}_{p={0}}^{N}A^p)\T\, \ebi = {\sum}_{s=\max(i-N,k)}^{i}\rho_{ks}\rho_{si}\rho_{ki}=(i+1-\max(i-N,k))\rho^2_{ki}$. The summation of the three terms establishes the required result.
\end{enumerate}

For the second result in {\emph{(ii)}}, recall
${\sum}_{t=1}^{\infty}(\tfrac{\pd \widetilde{F}_{ij}}{\pd u})^2={\sum}_{t=1}^{\infty}[(\tfrac{\pd x_i}{\pd u})^2+(\tfrac{\pd x_j}{\pd u})^2-2\tfrac{\pd x_i}{\pd u}\tfrac{\pd x_j}{\pd u}].$ Here $\Tc_1={\sum}_{t=1}^{\infty}(\tfrac{\pd x_i}{\pd u})^2,\;\Tc_2={\sum}_{t=1}^{\infty}(\tfrac{\pd x_j}{\pd u})^2$ for $j=i+1$ and are computed using Part 1.  The cross-term  forms $2\Tc_3+\Tc_4$ and can be computed as
$$2\,\tfrac{\pd x_i}{\pd u}\tfrac{\pd x_j}{\pd u}= 2\,\,\ebi\T \Qc^k \ebj.$$ The resulting $\ebi\T \Qc^k \ebj$ has four terms, and we compute them separately.
\begin{enumerate}[leftmargin=0.5cm, wide, labelwidth=!, labelindent=0pt, label=(\alph*)]
\item The first term $\ebi\T{\sum}_{t=1}^{N} \Pc^k_t\,\ebj = 0 $ because $\Pc^k_t$ is a diagonal matrix for all $t$.

\item The second term $\ebi\T{\sum}_{t=2}^{N}{\sum}_{p=1}^{t-1}\,(A^{t-p}\,\Pc_p^k)\,\ebj=0$ when $j=i+1$ by the same argument as in (ii).
 
\item The third term is  $\Tc_3=\ebi\T\,{\sum}_{t=2}^{N}\,{\sum}_{p=1}^{t-1}\,(\Pc_p^k {A^{t-p}}\T)\,\ebj$. This is $0$  for $i<k$. For $i\geq k$, $\ebi\T\,\Pc_p^k {A^{t-p}}\T\ebj\neq 0$ only when $p=t-1$ and $t\geq i-k+2$ and equals $\rho^2_{ki}\,\rho_{i,i+1}$. Therefore, for $N\geq i-k+2$, the expression $\ebi\T\,{\sum}_{t=2}^{N}\,{\sum}_{p=1}^{t-1} (\Pc_p^k {A^{t-p}}\T)\ebj = (N+k-i-1)\rho^2_{ki}\,\rho_{i,i+1}$, while for $N<i-k+2$, $\ebi\T\Pc_p^k = 0$ for all $p$. 
 
\item Let the fourth term be
\[\Tc_4=\ebi\T\,({\sum}_{p={0}}^{N}A^p)\,\Wc^k\,({\sum}_{p={0}}^{N}A^p)\T\, \ebj.\] For $j=i+1$,   the vector  $[({\sum}_{p={0}}^{N}A^p)\T\, \eb_{i+1}]_s = \rho_{s,i+1}$ is nonzero for $\max(i+1-N,1)\leq s\leq i+1$. Observe that, $\Wc^k\,({\sum}_{p={0}}^{N}A^p)\T\, \eb_{i+1}=0$ for the condition $i+1<k$.  Next, for $i+1\geq k$, we have $[\Wc^k({\sum}_{p={0}}^{N}A^p)\T\, \eb_{i+1}]_s = \rho^2_{ks}\rho_{s,i+1}= \rho_{ks}\rho_{k,i+1}$ with $\max(i+1-N,k)\leq s\leq i+1$. The nonzero elements of the row vector $[\ebi\T({\sum}_{p={0}}^{N}A^p)]_s = \rho_{si}$ with $\max(i-N,1)\leq s\leq i$.  Meanwhile, the common nonzero positions of $\ebi\T({\sum}_{p={0}}^{N}A^p)$ and $\Wc^k({\sum}_{p={0}}^{N}A^p)\T\, \eb_{i+1}$ are between $\max(i+1-N,k)$ and $i$. Therefore, $\ebi\T({\sum}_{p={0}}^{N}A^p)\,\Wc^k\,({\sum}_{p={0}}^{N}A^p)\T\, \eb_{i+1} = (i+1-\max(i-N+1,k))\rho^2_{ki}\rho_{i,i+1}.$ In particular, $\ebi\T ({\sum}_{p={0}}^{N}A^p)\,\Wc^k\,({\sum}_{p={0}}^{N}A^p)\T\, \eb_{i+1} =0$ for $i=k-1$.
\end{enumerate}
Combining $\Tc_1$, $\Tc_2$, $\Tc_3$, $\Tc_4$ completes the proof for impulse-train input.
\end{proof}

Theorem~\ref{thm-line-network-step-discrete} leverages closed-form expressions of the Gramian to provide a refinement of Theorem~\ref{thm-first-order-effects-step-discrete} for the special class of directed-line networks. For these networks, the first-order effects reveal an explicit dependence on edge weights, the duration of the input, and the distance between the node where the input is applied and the node where the effect is observed. Also, note that due to the directed nature of the network, certain nodes/edges remain unaffected by any disturbances affecting the system at a node later in the network.

\subsection{Continuous-Time Directed Line Network}
Now, the directed line networks following continuous-time dynamics as in \eqref{eq-dynamics-continuous} have self-loops, i.e., $a_{ii}= c,\; c<0 \,\forall\,\,i$. 
Given the lower triangular structure of the adjacency matrix,
the condition $c<0$ (with sufficiently large $\vert c \vert$) ensures the stability of the network dynamics. We first derive several properties of the directed line network with continuous-time dynamics.

\begin{lemma}\longthmtitle{Jordan normal form decomposition of adjacency matrix}
\label{lemma-jordan}
Consider the weighted adjacency matrix $A$ of the continuous-time directed line network. Then $AV=VJ$ where 
\begin{enumerate}[leftmargin=0.5cm,label=(\roman*)]
    \item $J_{ii} = c$ for $1\leq i \leq n$, $J_{i,i+1} = 1$ for $1\leq i \leq n-1$ and rest of the elements $0$s.
    \item $V_{n-i+1,i}=\rho_{1,n-i+1}$ for $1\leq i \leq n$ else $V_{ij}=0.$
     \item $\Vi_{n-i+1,i}=\rho^{-1}_{1i}$ for $1\leq i \leq n$ else $\Vi_{ij}=0.$
\end{enumerate}
\end{lemma}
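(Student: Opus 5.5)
The plan is to verify the identity $AV=VJ$ directly, column by column, using the explicit structure of the matrices, and then to invert $V$ by hand. First I would record the structure of $A$ for the continuous-time directed line network. It is lower bidiagonal, with $a_{rr}=c$ for all $r$, subdiagonal entries $a_{r+1,r}$ for $1\leq r\leq n-1$, and zeros elsewhere. The candidate $V$ in (ii) is anti-diagonal: its $i^{\text{th}}$ column is $V\eb_i=\rho_{1,n-i+1}\,\eb_{n-i+1}$. Since every edge weight is nonzero, $\rho_{1,n-i+1}\neq 0$, so $V$ is invertible.

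Next I would compute $AV\eb_i$ for each $i$. Because $A\eb_s=c\,\eb_s+a_{s+1,s}\,\eb_{s+1}$ for $s<n$ and $A\eb_n=c\,\eb_n$, setting $s=n-i+1$ gives
\begin{align*}
AV\eb_i=c\,\rho_{1,n-i+1}\,\eb_{n-i+1}+a_{n-i+2,n-i+1}\,\rho_{1,n-i+1}\,\eb_{n-i+2},
\end{align*}
where the second term is absent when $i=1$. By the multiplicative property $\rho_{1,n-i+2}=\rho_{1,n-i+1}\,a_{n-i+2,n-i+1}$ (a special case of $\rho_{ij}=\rho_{ik}\rho_{kj}$ from \eqref{note1}), the second term equals $\rho_{1,n-i+2}\,\eb_{n-i+2}$, which is exactly $V\eb_{i-1}$.

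On the other side, $J$ as in (i) is upper bidiagonal with $c$ on the diagonal and $1$ on the superdiagonal. Hence $J\eb_i=c\,\eb_i+\eb_{i-1}$ for $i\geq 2$, and $J\eb_1=c\,\eb_1$. This gives $VJ\eb_i=c\,V\eb_i+V\eb_{i-1}$, which matches $AV\eb_i$ column by column, including the boundary case $i=1$ where both extra terms vanish. This establishes $AV=VJ$. Since $J$ is a single Jordan block with eigenvalue $c$, this is indeed the Jordan normal form.

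For (iii), note that an anti-diagonal matrix with nonzero entries $V_{n-i+1,i}$ has an anti-diagonal inverse with entries $\Vi_{i,n-i+1}=1/V_{n-i+1,i}=\rho^{-1}_{1,n-i+1}$. Relabelling $j=n-i+1$ turns this into $\Vi_{n-j+1,j}=\rho_{1j}^{-1}$, which is the stated form. One can confirm this by checking $V\Vi\eb_j=\rho_{1j}^{-1}V\eb_{n-j+1}=\rho_{1j}^{-1}\rho_{1j}\eb_j=\eb_j$.

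The only delicate step is the index bookkeeping. The subdiagonal entry of $A$ must produce precisely $\rho_{1,n-i+2}$, with the right boundary treatment at $i=1$, and the inverse's indices must be relabelled consistently with the statement. Everything else is routine.
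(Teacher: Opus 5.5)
Your proof is correct, and it takes a different route from the paper, which gives no computation and simply cites \cite[Chapter 4, Section 4.9]{CDM:2023}. Your column identity $AV\eb_i=c\,V\eb_i+V\eb_{i-1}$, with the second term absent for $i=1$, matches $VJ\eb_i$ exactly. The relabelling $j=n-i+1$ of the anti-diagonal inverse then reproduces (iii).

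Your direct verification is more elementary and self-contained, and it gives two things the citation does not:
\begin{enumerate}[label=(\alph*)]
\item It certifies the \emph{particular} normalization of $V$. Jordan bases are not unique, so a general textbook result does not by itself confirm the anti-diagonal scaling $\rho_{1,n-i+1}$. That scaling is what the proofs of Lemmas~\ref{lemma-gramian} and~\ref{lemma-vqi} actually use.
\item It shows exactly where the nonvanishing of the edge weights $a_{r+1,r}$ enters: it makes $V$ invertible and $A$ similar to a single Jordan block.
\end{enumerate}

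A compact way to package your computation is $V=\mathrm{diag}(\rho_{11},\ldots,\rho_{1n})\,R$, where $R$ is the reversal permutation. Conjugating by the diagonal factor turns every subdiagonal entry $a_{r+1,r}$ into $\rho_{1,r+1}^{-1}a_{r+1,r}\rho_{1r}=1$. Conjugating by $R$ then flips the lower bidiagonal matrix into the upper bidiagonal $J$. This also makes $\Vi=R\,\mathrm{diag}(\rho_{11}^{-1},\ldots,\rho_{1n}^{-1})$ immediate.
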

\begin{proof}
    Follows from \cite[Chapter 4, Section 4.9]{CDM:2023}.
\end{proof}

Note that each $i^{\text{th}}$ column of $V$ or $\Vi$ has only one nonzero element at position $n-i+1$; thus, $V$ or $\Vi$ has $n$ nonzero elements, one in each column. Next, we compute the finite-time horizon controllability Gramian $\Wc^k$.
\begin{lemma}\longthmtitle{Computation of $\Wc^k$}\label{lemma-gramian}
Consider the weighted adjacency matrix $A$ of the directed line network following continuous-time dynamics. Let the input be applied at the $k^{\text{th}}$ node. Then the structure of the controllability Gramian $\Wc^k$ is as follows
\begin{enumerate}[leftmargin=0.5cm,label=(\roman*)]
\item for an impulse input at $t=0$ the infinite-time  controllability Gramian is
\begin{align*}
    \Wc^k_{fg} = \begin{cases}
        \frac{\rho_{1f}\,\rho_{1g}\,\eta !}{\rho^2_{1k}(f-k)!\,(g-k)!\, (-2c)^{\eta+1}}& \text{for}\; f\geq k \; \text{and}\; g\geq k,\\
        0 \qquad \hspace{3cm} &\text{otherwise}.
        \end{cases}
\end{align*}
        \item the finite-time controllability Gramian for $t\in[0,\tau]$ is
        \begin{align*}
    [\Pc^k_{\tau}]_{fg} = \begin{cases}
        \frac{\rho_{1f}\,\rho_{1g}\,\eta !}{\rho^2_{1k}(f-k)!\,(g-k)!\, (-2c)^{\eta+1}}&\hspace{-0.5cm}\big[1-e^{2c\tau}\sum_{p=0}^\eta \frac{(-2c\tau)^p}{p!}\big],\\
        \hspace{1cm}&\text{for}\,\, f\geq k \; \text{and}\; g\geq k,\\
        0 \quad \hspace{3cm} &\text{otherwise},
    \end{cases}
\end{align*}    
\end{enumerate}
where $\eta=f+g-2k$.
\end{lemma}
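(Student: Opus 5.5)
The plan is to compute the Gramian entrywise from an explicit formula for the column $e^{At}\ebk$, without solving the Lyapunov equation \eqref{eq-Lyapunov-continuous}. For the continuous-time directed line network we have $A = cI + N$, where $N$ is the strictly lower-triangular matrix with $N_{r+1,r}=a_{r+1,r}$. Since $cI$ and $N$ commute and $N^n=0$, we get $e^{At}=e^{ct}\sum_{m=0}^{n-1}\frac{t^m}{m!}N^m$. This is the same object as $Ve^{Jt}\Vi$ from Lemma~\ref{lemma-jordan}. I will use the nilpotent form because it avoids tracking the anti-diagonal structure of $V$ and $\Vi$; the Jordan form can serve as a consistency check.

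The first step is to describe $N^m\ebk$, exactly as was done for $A^r\eb_s$ in the discrete-time case. The vector $N^m\ebk$ has a single nonzero entry, at position $k+m$, equal to $\rho_{k,k+m}$, and $N^m\ebk=0$ for $m>n-k$. Hence $[e^{At}\ebk]_f = 0$ for $f<k$, and for $f\ge k$
\begin{equation*}
[e^{At}\ebk]_f = e^{ct}\,\frac{t^{f-k}}{(f-k)!}\,\rho_{kf}.
\end{equation*}
Using $\rho_{1f}=\rho_{1k}\rho_{kf}$ from \eqref{note1}, we rewrite $\rho_{kf}\rho_{kg}=\rho_{1f}\rho_{1g}/\rho^2_{1k}$. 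This produces the prefactor in the statement.

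The second step forms the Gramian entry $[\Pc^k_\tau]_{fg}=\int_0^\tau [e^{At}\ebk]_f[e^{At}\ebk]_g\,\td t$. This entry vanishes unless $f,g\ge k$, and in that case it equals
\begin{equation*}
\frac{\rho_{1f}\rho_{1g}}{\rho_{1k}^2(f-k)!\,(g-k)!}\int_0^\tau t^{\eta}e^{2ct}\,\td t, \qquad \eta=f+g-2k.
\end{equation*}
What remains is the scalar integral $I_\eta(\tau)=\int_0^\tau t^\eta e^{2ct}\,\td t$ with $c<0$. Substituting $s=-2ct$ turns it into a lower incomplete gamma function, $(-2c)^{-(\eta+1)}\gamma(\eta+1,-2c\tau)$. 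For integer $\eta$ this evaluates to $\frac{\eta!}{(-2c)^{\eta+1}}\big[1-e^{2c\tau}\sum_{p=0}^{\eta}\frac{(-2c\tau)^p}{p!}\big]$, which gives part (ii).

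I expect the main obstacle to be justifying this closed form cleanly. I would prove it by induction on $\eta$ using integration by parts, $I_\eta = \frac{\tau^\eta e^{2c\tau}}{2c}-\frac{\eta}{2c}I_{\eta-1}$, with base case $I_0=(e^{2c\tau}-1)/(2c)$. Alternatively, one can differentiate the claimed expression in $\tau$ and check that the sum telescopes to $\tau^\eta e^{2c\tau}$. Part (i) then follows by letting $\tau\to\infty$: since $c<0$, $e^{2c\tau}\tau^p\to0$ for every $p$, so the bracket tends to $1$ and $\Wc^k=\Pc^k_\infty$, consistent with the remark following Theorem~\ref{thm-first-order-effects-impulse}.
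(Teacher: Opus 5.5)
Your proof is correct. It shares the paper's skeleton: get a closed form for the column $e^{At}\ebk$, multiply its $f$th and $g$th entries, and integrate $t^{\eta}e^{2ct}$. The differences are in how the column is produced and how the integral is handled.

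The paper writes $A=VJ\Vi$ using Lemma~\ref{lemma-jordan}, whose anti-diagonal $V,\Vi$ are taken from a textbook. It computes $\Vi\ebk$, then $e^{Jt}\Vi\ebk$, then left-multiplies by $V$ to reach $[e^{At}\ebk]_s=\rho_{1s}e^{ct}t^{s-k}/(\rho_{1k}(s-k)!)$. It then quotes Gradshteyn--Ryzhik 3.351 separately for the finite and infinite integrals.

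You instead use the additive split $A=cI+N$, with $N$ nilpotent and commuting with $cI$. This gives the same column in one line with no similarity transform. You also prove the finite-horizon integral identity yourself (via the incomplete gamma function, induction by parts, or differentiation), and you obtain (i) as the $\tau\to\infty$ limit of (ii).

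Your route is more elementary and self-contained. It also shows that the Jordan machinery is not really needed, because $A-cI$ is already nilpotent. The paper's route has the advantage of a single factorization that it reuses in Lemma~\ref{lemma-vqi} for $\Phi_q\ebi$. Your form handles that computation equally easily, via $(N\T)^m\ebi=\rho_{i-m,i}\,\eb_{i-m}$.

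The only implicit hypothesis in your rewriting $\rho_{kf}=\rho_{1f}/\rho_{1k}$ is $\rho_{1k}\neq 0$. This holds because all edge weights of the line network are nonzero.
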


\begin{proof}
Let $\mathcal{T} = e^{At}\,\eb_k\,\eb_k\T\,e^{A\T t}$. By Theorem \ref{thm-first-order-effects-step-continuous}, $\Wc^k=\dspi_{0}^{\tau}\mathcal{T}\, \td t$. Applying Lemma \ref{lemma-jordan}, \cite[Chapter 4]{CDM:2023}, and $A= V J \Vi$, we get $$\mathcal{T} = V e^{Jt}\,\Vi\eb_k\,\eb_k\T\, \Vit e^{J\T t} V\T=[Ve^{Jt}\,\Vi\ebk]\,[Ve^{Jt}\,\Vi\ebk]\T.$$ The vector $\Vi\ebk$ has only one nonzero element at position $n-k+1$ with value $[\Vi\ebk]_{n-k+1}={1}/{\rho_{1k}}$. Since $e^{Jt}$ is an upper-triangular matrix, $[e^{Jt}]_{fg}=\frac{e^{ct}t^{g-f}}{(g-f)!}$ for $g\geq f$ and $[e^{Jt}]_{fg}=0$ otherwise. The vector $e^{Jt}\Vi\ebk$ is therefore 
    \begin{align*}
        [e^{Jt}\Vi\ebk]_s = \begin{cases}
            \frac{e^{ct}t^{n-k+1-s}}{\rho_{1k}(n-k+1-s)!}, \quad &\text{for} \quad 1\leq s \leq n-k+1,\\
            0, \hspace{2.3cm} &\text{for} \quad n-k+1 <s\leq n.
        \end{cases}
    \end{align*}
   On left-multiplying by $V$, $Ve^{Jt}\Vi\ebk$ is 
    \begin{align*}
        [Ve^{Jt}\Vi\ebk]_s = \begin{cases}
        0, \hspace{2.3cm} &\text{for} \quad 1 <s\leq k-1,\\
            \frac{\rho_{1s}\,e^{ct}t^{s-k}}{\rho_{1k}(s-k)!}, \quad\qquad &\text{for} \quad k\leq s \leq n.
                 \end{cases}
    \end{align*}
    Consequently, 
    \begin{align*}
    \mathcal{T}_{fg} = \begin{cases}
        \frac{\rho_{1f}\,\rho_{1g}\,e^{2ct}t^{\eta}}{\rho^2_{1k}(f-k)!\,(g-k)!},&\text{for}\quad f\geq k \; \text{and}\; g\geq k,\\
        0 \qquad \hspace{0cm} &\text{otherwise}.
    \end{cases}
\end{align*}

We obtain {\emph{(i)}} by integrating $\mathcal{T}_{fg}$ with respect to $t$ using \cite[3.351-3]{ISG-IMR:2000} with limits $t=0$ and $t\rightarrow \infty$. Similarly, we obtain \emph{(ii)} by integrating $\mathcal{T}_{fg}$ with respect to $t$ using \cite[3.351-1]{ISG-IMR:2000} with limits $t=0$ and $t=\tau$.
 \end{proof}
 
Note that $\Wc^k$ is a block-diagonal matrix with the first $k-1$ rows and columns being zero. 
Next, we compute $\Phi_q\,\ebi$.
\begin{lemma}\longthmtitle{Computation of $\Phi_q\,\ebi$}\label{lemma-vqi}
Consider the weighted adjacency matrix $A$ of the directed line network following continuous-time dynamics. For the unit vector $\ebi$, we have 
\begin{align*}
[\Phi_q\,\ebi]_s = \begin{cases}
        \frac{\rho_{1i} \mathlarger{\sum}_{p=0}^{q}e^{pc\tau} (p\tau)^{i-s}}{\rho_{1s}(i-s)!} \qquad & \text{for}\quad 1\leq s \leq i,\\
        0 \qquad  &\text{otherwise}.
    \end{cases}
\end{align*}
\end{lemma}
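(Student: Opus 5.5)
The plan is to compute $\Phi_q\,\ebi$ one summand at a time. By definition, $\Phi_q\,\ebi=\sum_{p=0}^{q}e^{pA\T\tau}\,\ebi$. Since $e^{pA\T\tau}=(e^{pA\tau})\T$, the $s^{\text{th}}$ entry of $e^{pA\T\tau}\ebi$ is $[e^{pA\tau}]_{is}$. So it suffices to find a closed form for the entries of $e^{At}$ and then set $t=p\tau$ and sum over $p$.

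For $e^{At}$, I would not use the Jordan form of Lemma~\ref{lemma-jordan}. Instead I would use the additive splitting $A=cI+N$, where $N$ is the strictly lower-triangular part with entries $N_{r+1,r}=a_{r+1,r}$. Since $cI$ commutes with $N$, we have $e^{At}=e^{ct}\sum_{m=0}^{n-1}\frac{t^m}{m!}N^m$, and the series terminates because $N$ is nilpotent. By the same observation used in the discrete-time subsection (that $A^r\eb_s$ has a single nonzero entry), an induction on $m$ gives
\[
[N^m]_{s+m,s}=\rho_{s,s+m},
\]
with all other entries of $N^m$ equal to zero. Hence $[e^{At}]_{is}=e^{ct}\,t^{i-s}\rho_{si}/(i-s)!$ for $s\le i$, and $[e^{At}]_{is}=0$ for $s>i$.

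Next I would use the multiplicativity property stated after \eqref{note1}, namely $\rho_{1i}=\rho_{1s}\rho_{si}$ for $1\le s\le i$. This allows writing $\rho_{si}=\rho_{1i}/\rho_{1s}$; the division is legitimate because all edge weights are nonzero, so $\rho_{1s}\neq 0$. Substituting $t=p\tau$ and summing over $p=0,\dots,q$ then gives exactly the claimed expression for $1\le s\le i$, and zero otherwise. As a consistency check, the same entries can be recovered from $e^{At}=Ve^{Jt}\Vi$ using Lemma~\ref{lemma-jordan}, mirroring the computation of $Ve^{Jt}\Vi\ebk$ in the proof of Lemma~\ref{lemma-gramian}. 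This check confirms the reversed indexing of $V$ and $\Vi$.

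The step I expect to need the most care is the index bookkeeping. First, the transpose must be handled correctly, so that the result involves row $i$ of $e^{At}$ and not column $i$; this is what places the support at $s\le i$ rather than $s\ge i$. Second, the $p=0$ term must be consistent with the convention $0^0=1$, so that it contributes only at $s=i$. Beyond these points, the argument is a direct finite computation.
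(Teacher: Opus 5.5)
Your proof is correct, but it takes a different route from the paper's. The paper conjugates by the Jordan basis of Lemma~\ref{lemma-jordan}, writing $\Phi_q\ebi=\Vit\sum_{p=0}^{q}e^{pJ\T\tau}V\T\ebi$. It then follows the single nonzero entry of $V\T\ebi$ (at position $n-i+1$) through the lower-triangular $e^{pJ\T\tau}$ and back through the anti-diagonal $\Vit$. The index reversal $s\mapsto n-s+1$ in that bookkeeping is what produces the ratio $\rho_{1i}/\rho_{1s}$ directly.

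You instead use the commuting splitting $A=cI+N$ with $N$ nilpotent, so that $e^{At}=e^{ct}\sum_m t^mN^m/m!$. Each entry is then a single path weight, $[e^{At}]_{is}=e^{ct}t^{i-s}\rho_{si}/(i-s)!$, and only at the end do you rewrite $\rho_{si}=\rho_{1i}/\rho_{1s}$. This has several advantages:
\begin{itemize}
\item It needs no change of basis and avoids the error-prone reversed indexing of $V$ and $\Vi$.
\item It exposes the more interpretable intermediate quantity $\rho_{si}$, the weight of the unique directed path from $s$ to $i$.
\item It uses the nonzero-weight assumption only in the final rewriting.
\item It yields all of $e^{At}$ at once, so the same computation also gives $e^{At}\ebk$ in Lemma~\ref{lemma-gramian} without the Jordan machinery.
\end{itemize}
The paper's route buys uniformity: it reuses the Jordan decomposition already set up for Lemma~\ref{lemma-gramian}.

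Both arguments rely on the self-loop weight $c$ being the same at every node. You need it so that $cI$ commutes with $N$; the paper needs it for the single Jordan block of Lemma~\ref{lemma-jordan}.
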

\begin{proof}
    From Theorem \ref{thm-first-order-effects-step-continuous} we have $$\Phi_q\,\ebi=\Big(\mathlarger{\sum}_{p=0}^{q}e^{pA\tau}\Big)\T\eb_i=\Vit \mathlarger{\sum}_{p=0}^{q}e^{pJ\T\tau}V\T\ebi.$$ The vector
    $V\T \ebi$ has only one nonzero element at position $n-i+1$ whose value is $[V\T \ebi]_{n-i+1}=\rho_{1i}$. As $e^{pJ\T\tau}$ is a lower-triangular matrix, we get $[e^{pJ\T \tau}]_{fg}=\frac{e^{pct}(p\tau)^{f-g}}{(f-g)!}$ for $f\geq g$ and $[e^{pJ\T \tau}]_{fg}=0$ otherwise. This leads to
    \begin{align*}
        &[e^{pJ\T \tau} V\T\ebi]_s = \begin{cases}
        0, \quad&\text{for} \quad 1\leq s <n+1-i,\\
            \frac{\rho_{1i}\,e^{pct}(p\tau)^{s-n+i-1}}{(s-n+i-1)!}, &\text{for} \quad n+1-i\leq s \leq n.
            \end{cases}\\
            &[\Vit e^{pJ\T \tau} V\T\ebi]_s = \begin{cases}
          \frac{\rho_{1i}\,e^{pct}(p\tau)^{i-s}}{\rho_{1s}(i-s)!}, &\text{for} \quad 1\leq s \leq i,\\
          0, & \text{for} \quad i< s\leq n.
            \end{cases}
    \end{align*}
Taking the sum from $p=0$ to $q$ gives the required result.
\end{proof}

Using Lemma \ref{lemma-vqi}, and defining the vector $\vqid$ as
\begin{align}
    \label{vqi-diff}
    &\vqid = \Phi_q\,\ebi- \Phi_q\,\eb_{i+1},\text{we get}\\
   \notag &[\vqid]_s = \begin{cases}
        \frac{\rho_{0,i-1}}{\rho_{0,s-1}(i-s)!} \mathlarger{\sum}_{p=0}^{q}e^{pc\tau} (p\tau)^{i-s}\bigg(1-\frac{a_{i+1,i}p\,\tau}{i+1-s}\bigg),&\\
        &\hspace{-3cm}\text{for}\; 1\leq s \leq i,\\
        -\sum_{p=0}^{q}e^{pc\tau} \;  &\hspace{-3cm}\text{for} \quad s=i+1,\\
        0 \;  &\hspace{-3cm}\text{otherwise}.
    \end{cases}
\end{align}
As we are equipped with the necessary preliminaries, we will proceed to present analytical expressions for the first-order effect of impulse input on directed line networks, for continuous-time dynamics, followed by the impulse-train input case. The following result is a generalization of \cite[Theorem 6]{PVC-BKP-JC:2023}, as it extends the results to all network edges. 

 \begin{theorem}\longthmtitle{Impulse input effect in directed line networks  with continuous-time dynamics}\label{thm-line-network-impulse-continuous}
Consider the continuous-time directed line network dynamics in \eqref{eq-dynamics-whole}, and let an impulse input $\ub = u\delta(t)$ with $u\in\real$ be applied at node $k$. Then  we have the following for the node $i$ and the edge $i\rightarrow j$ with $j=i+1$,
    \begin{enumerate}[leftmargin=0.5cm, label=(\roman*)]
\item $\dspi_0^{\infty}\Big(\dfrac{\pd x_i}{\pd u}\Big)^2 \textup{d} t= \begin{cases}
0, \quad &\text{for}\quad i<k,\\
\frac{\rho^2_{1i}\ (2i-2k) !}{\rho^2_{1k}((i-k)!)^2\, (-2c)^{2i-2k+1}}& \text{for}\quad  i> k.
\end{cases}$
\item $ \dspi_0^{\infty}\!\!\Big(\dfrac{\pd \widetilde{F}_{ij}}{\pd u}\Big)^2\textup{d} t=\begin{cases}
    0, \quad &\hspace{-3.5cm}\text{for}\quad i<k-1,\\
     -\dfrac{1}{2c}, \quad &\hspace{-3.5cm}\text{for}\quad i=k-1,\\   
    \Big[1+\dfrac{a_{i+1,i}(2i-2k+1)(2c+a_{i+1,i})}{2c^2(i-k+1)}\Big]\Wc^k_{ii}\\
    &\hspace{-3.5cm}\text{for}\quad i\geq k.
\end{cases} $
	 \end{enumerate}
where $\widetilde{F}_{ij}=F_{ij}/\alpij$.
    \end{theorem}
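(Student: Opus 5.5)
The plan is to specialize Theorem~\ref{thm-first-order-effects-impulse} to the directed line network with $T\to\infty$. Stability of $A$ then gives $\Qc^k=\Pc^k_\infty=\Wc^k$, and Lemma~\ref{lemma-gramian}(i) gives $\Wc^k$ in closed form. With this substitution the statement reduces to
\[
\dspi_0^\infty\Big(\tfrac{\pd x_i}{\pd u}\Big)^2\td t=\Wc^k_{ii},
\qquad
\dspi_0^\infty\Big(\tfrac{\pd \widetilde F_{ij}}{\pd u}\Big)^2\td t=\Wc^k_{ii}+\Wc^k_{i+1,i+1}-2\Wc^k_{i,i+1},
\]
since dividing $F_{ij}$ by $\alpij$ removes the factor $\alpij^2$ in Theorem~\ref{thm-first-order-effects-impulse}(ii). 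Everything then amounts to evaluating these Gramian entries.

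For (i), I take $f=g=i$, so that $\eta=2i-2k$ in Lemma~\ref{lemma-gramian}(i). This gives $\Wc^k_{ii}=\rho_{1i}^2(2i-2k)!/\big(\rho_{1k}^2((i-k)!)^2(-2c)^{2i-2k+1}\big)$ for $i\ge k$. For $i<k$ the entry is $0$, because the first $k-1$ rows and columns of $\Wc^k$ vanish. The same formula also covers $i=k$, where it gives $-1/(2c)$.

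For (ii), I split into cases.
\begin{itemize}
\item If $i<k-1$, all three entries lie in the zero block, so the integral is $0$.
\item If $i=k-1$, only $\Wc^k_{kk}=1/(-2c)$ survives, which gives $-1/(2c)$.
\item If $i\ge k$, set $m=i-k$ and $a=a_{i+1,i}$, and use $\rho_{1,i+1}=\rho_{1i}\,a$. I express the other two entries as multiples of $\Wc^k_{ii}$:
\[
\frac{\Wc^k_{i+1,i+1}}{\Wc^k_{ii}}=\frac{a^2(2m+2)(2m+1)}{(m+1)^2(-2c)^2}=\frac{a^2(2m+1)}{2c^2(m+1)},
\qquad
\frac{\Wc^k_{i,i+1}}{\Wc^k_{ii}}=\frac{a(2m+1)}{(m+1)(-2c)}.
\]
In the second ratio, $\eta=2m+1$ gives $(2m+1)!/(2m)!=2m+1$, the factor $1/(m+1)$ comes from $(g-k)!=(m+1)!$, and one extra power of $(-2c)$ appears. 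It follows that $-2\Wc^k_{i,i+1}=\Wc^k_{ii}\cdot\dfrac{a(2m+1)\,2c}{2c^2(m+1)}$. Adding the three terms and factoring out $\Wc^k_{ii}$ yields
\[
\Big[1+\frac{a(2m+1)(2c+a)}{2c^2(m+1)}\Big]\Wc^k_{ii},
\]
which is the claimed expression.
\end{itemize}

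The main obstacle is the algebra in the case $i\ge k$: the factorial ratios, the powers of $(-2c)$ and the sign of the cross term must be tracked consistently so that they collapse into the single factor $(2c+a_{i+1,i})$.
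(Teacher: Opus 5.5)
Your proposal is correct and matches the paper's proof. Like the paper, you specialize Theorem~\ref{thm-first-order-effects-impulse} with $T\to\infty$ so that $\Qc^k=\Wc^k$, read the entries off Lemma~\ref{lemma-gramian}(i), and express $\Wc^k_{i,i+1}$ (and $\Wc^k_{i+1,i+1}$) as multiples of $\Wc^k_{ii}$; the paper obtains the same ratio $\Wc^k_{i,i+1}=-\frac{a_{i+1,i}(2i-2k+1)}{2c(i-k+1)}\Wc^k_{ii}$, and your factorial, $(-2c)$-power and sign bookkeeping, including the cases $i<k-1$ and $i=k-1$, checks out.
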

    
\begin{proof}
\begin{enumerate}[leftmargin=0.5cm, label=\emph{(\roman*)}]
\item This result is proved by invoking Theorem \ref{thm-first-order-effects-impulse} and setting $f=g=i$ in 1) of Lemma \ref{lemma-gramian}.
\item Using Lemma \ref{lemma-gramian} for $j=i+1$ we get
\begin{align*}
\Wc^k_{i,i+1} &= -\dfrac{a_{i+1,i}(2i-2k+1)}{2c(i-k+1)}\Wc_{ii}=-\frac{c}{a_{i+1,i}}\Wc^k_{i+1,i+1}.
\end{align*}
\end{enumerate} Now, using Theorem \ref{thm-first-order-effects-impulse}, we get the required result. 
\end{proof}

Theorem~\ref{thm-line-network-impulse-continuous}  refines Theorem~\ref{thm-first-order-effects-impulse} for the Gramian for the special case of directed-line networks. We note that the nodal distance of the observed node $i$ from the input node $k$ plays a significant role in determining the effect of the disturbance. Finally, we consider the impulse-train input case.
    
\begin{theorem}\longthmtitle{Impulse-train input effect in directed line networks  with continuous-time dynamics}\label{thm-line-network-impulse-train-continuous}
Consider the continuous-time directed line network dynamics in \eqref{eq-dynamics-whole}.  Let a train of impulse inputs of magnitude $u\in\real$ be applied at node $k$ at $t = t_0,\, t_1,\ldots,t_N$ such that $t_{q+1}-t_q = \tau$ $\forall$ $q = 0,\,1,\ldots,N.$   Then  we have the following for the node $i$ and the edge $i\rightarrow j$ with $j=i+1$,
\begin{enumerate}[leftmargin=0.5cm, label=(\roman*)]
\item $\dspi_0^{\infty}\Big(\dfrac{\pd x_i}{\pd u}\Big)^2 \textup{d} t= \begin{cases}
0, \qquad \text{for}\quad i<k,\\
\mathlarger{\sum}_{q=0}^{N-1}\,\mathlarger{\sum}_{f=k}^{i}\,\mathlarger{\sum}_{g=k}^{i}\bigg\{[\Phi_q\,\ebi]_f[\Phi_q\,\ebi]_g\,[\Pc^k_{\tau}]_{fg}\\ +[\Phi_N\,\ebi]_f[\Phi_N\,\ebi]_g\,\Wc^k_{fg}\bigg\},\, \, \text{for}\, \,i\geq k.
\end{cases}$
\item $ \dfrac{1}{\cij^2}\,\dspi_0^{\infty}\Big(\dfrac{\pd F_{ij}}{\pd u}\Big)^2\textup{d} t=\begin{cases}
0, \quad \text{for}\quad i<k-1,\\
\mathlarger{\sum}_{q=0}^{N-1}\,\mathlarger{\sum}_{f=k}^{i+1}\,\mathlarger{\sum}_{g=k}^{i+1}\bigg\{[\vqid]_f[\vqid]_g\,[\Pc^k_{\tau}]_{fg}\\ +[\mu^i_N]_f[\mu^i_N]_g\,\Wc^k_{fg}\bigg\},\;\text{for}\;i\geq k-1.
\end{cases}$
\end{enumerate}
\end{theorem}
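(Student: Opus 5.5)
The plan is to specialise Theorem~\ref{thm-first-order-effects-step-continuous} to the directed line network with $T\to\infty$. Since $A$ is stable, $\Pc^k_{T-T_s}\to\Wc^k$. Theorem~\ref{thm-first-order-effects-step-continuous} then gives
\[
\dspi_0^{\infty}\Big(\frac{\pd x_i}{\pd u}\Big)^2\td t=\sum_{q=0}^{N-1}(\Phi_q\ebi)\T\Pc^k_\tau(\Phi_q\ebi)+(\Phi_N\ebi)\T\Wc^k(\Phi_N\ebi).
\]
Here I read the quadratic form as $\ebi\T\Phi_q\T\Pc^k_\tau\Phi_q\ebi$, which is how it arises in the proof of that theorem. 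Each term is a quadratic form $v\T M v$, which I will write as $\sum_{f,g}v_fv_gM_{fg}$. The work is then to identify which indices $f,g$ contribute, using the explicit sparsity patterns from Lemmas~\ref{lemma-gramian} and~\ref{lemma-vqi}.

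\textbf{Part (i).} By Lemma~\ref{lemma-gramian}, both $\Pc^k_\tau$ and $\Wc^k$ vanish outside the block $f,g\geq k$. By Lemma~\ref{lemma-vqi}, $[\Phi_q\ebi]_s=0$ for $s>i$. Hence only indices with $k\le f,g\le i$ survive, which yields exactly the displayed triple sum for $i\ge k$. For $i<k$ this index range is empty, so the integral is $0$.

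\textbf{Part (ii).} Following the proof of Theorem~\ref{thm-first-order-effects-step-continuous}(ii), the integral of $(\pd F_{ij}/\pd u)^2/\cij^2$ equals $\ebij\T\Qc^k\ebij$ with $\ebij=\ebi-\eb_{i+1}$. Substituting $\Phi_q\ebij=\vqid$ from \eqref{vqi-diff} gives
\[
\sum_{q=0}^{N-1}(\vqid)\T\Pc^k_\tau\,\vqid+(\mu^i_N)\T\Wc^k\mu^i_N.
\]
From \eqref{vqi-diff} (equivalently, Lemma~\ref{lemma-vqi} applied to $i$ and to $i+1$), $\vqid$ is supported on $s\le i+1$. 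Combined with the $f,g\ge k$ support of the Gramians, this restricts the sum to $k\le f,g\le i+1$. That range is nonempty exactly when $i\ge k-1$, and empty for $i<k-1$, which gives $0$ there.

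\textbf{Main obstacle.} The delicate step is justifying the $T\to\infty$ limit and checking index bookkeeping between the two lemmas. Lemma~\ref{lemma-jordan} indexes through $V$ and $\Vi$ in reversed order, and I must confirm that the support of $\Phi_q\ebi$ really is $\{1,\dots,i\}$ in the original coordinates. I would also confirm convergence: for fixed $N$, $\Phi_N$ is a finite sum, and $\Pc^k_{T-T_s}\to\Wc^k$ entrywise, which follows from Lemma~\ref{lemma-gramian}(ii) as $e^{2c\tau}\to0$ with $c<0$. Monotone convergence then lets the integral over $[T_s,\infty)$ be taken directly.

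The boundary case $i=k-1$ is a useful consistency check. There only $f=g=k$ contributes, and $[\vqid]_k=-\sum_p e^{pc\tau}$. The $q$ range should also be read as in Theorem~\ref{thm-first-order-effects-step-continuous}, with $q=0,\dots,N-1$.
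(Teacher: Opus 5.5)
Your proposal is correct and follows essentially the paper's own argument. The paper's proof simply invokes Theorem~\ref{thm-first-order-effects-step-continuous} (with $\Pc^k_{T-T_s}\to\Wc^k$ as $T\to\infty$), Lemma~\ref{lemma-vqi} and \eqref{vqi-diff}, and the summation limits come from exactly the supports you identify: $f,g\geq k$ from Lemma~\ref{lemma-gramian}, and $s\leq i$ or $s\leq i+1$ from Lemma~\ref{lemma-vqi}. One small correction to your claim that this yields ``exactly the displayed triple sum'': your derivation gives the $\Wc^k$-term with $\Phi_N\ebi$ (resp.\ $\mu^i_N$) once, outside $\sum_{q=0}^{N-1}$, which is the intended reading, whereas taken literally the braces in the statement would count that term $N$ times.
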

\begin{proof}
   Using \eqref{vqi-diff}, Lemma \ref{lemma-vqi}, and Theorem \ref{thm-first-order-effects-step-continuous}, we get the required results. It is worth noting that in addition to the factors influencing vulnerability, from the previous theorems, the length of the input train also plays a role. 
\end{proof}

Similarly, Theorem \ref{thm-line-network-impulse-train-continuous} is a refinement of Theorem \ref{thm-first-order-effects-step-continuous}.
In contrast to the Gramian-based expressions of Section~\ref{nodal-input}, the results of this section for directed line networks bring to the fore the explicit role of edge weights on the network vulnerability, duration of disturbance inputs, and nodal distance between input and observed node, among other factors. This enables a deeper understanding of how the underlying network structure affects the system's vulnerability.

\section{Numerical Experiments}\label{examples}
Here we present numerical simulations to illustrate our results and demonstrate their efficacy and applicability.
We consider directed line and Erd\H{o}s-R\'{e}nyi networks. Our treatment covers both discrete-time and continuous-time dynamics, further categorized by impulse and step-input scenarios.

\subsubsection*{Motivating Scenario}
To motivate the numerical experiments, consider the following attacker-defender framework. An attacker may inject impulse or step disturbances at selected nodes to maximize transient flow deviations across network edges. The defender prepares for the attack by preemptively identifying vulnerabilities and taking appropriate remediation.

Such a framework has direct applications to real-world critical infrastructure networks, such as power grids, communication and cyber-physical networks, and transportation networks. In power grids, such attacks could include false data injection, load-altering attacks, or physical impulses (either generator or load trips). In communication networks, these could include targeted DDoS pulses and malware injections that spike link loads. Here, the defender employs the VM to identify which nodes will need strengthening to reduce attack effectiveness. The procedure to do so is described in Algorithm~\ref{alg:validation_a}.

\begin{algorithm}[htb]
\caption{Vulnerability Matrix-based Nodal Influence}
\label{alg:validation_a}
\begin{algorithmic}[1]
    \State Compute the influence of each node in the network from the Vulnerability Matrix as defined in Section \ref{nodal-input}
    \State Sort the nodes in descending order of nodal influence
    \State Form a set $\Vc_s$ of the top $N_s$ most influential nodes from the sorted nodes
\end{algorithmic}
\end{algorithm}

Algorithm~\ref{alg:validation_b} employs these performance metrics to validate our VM-based approach (cf. Algorithm~\ref{alg:validation_a}) against actual time-domain dynamic simulations, with an effectiveness metric $\kappa$. A higher $\kappa$ indicates a better identification of vulnerable nodes.

\begin{algorithm}[htb]
\caption{Validation of identification of vulnerable nodes}
\label{alg:validation_b}
\begin{algorithmic}[1]
    \State Simulate the actual dynamics of the network and compute the flow profiles with inputs (for both step and impulse) at individual nodes
    \State For each nodal input, compute the effect through $\Jc_2$ or $\Jc_{\infty}$ performance by simulating the network dynamics for the time duration $T$
    \State Rank the nodes in descending order according to the selected performance metric and form the set $\Vc_f$ by selecting the top $N_s$ nodes
    \State Measure the effectiveness by evaluating $\kappa = \tfrac{|\Vc_s \cap \Vc_f|}{N_s}$, where $\Vc_s$ is determined by Algorithm~\ref{alg:validation_a}
\end{algorithmic}
\end{algorithm}

\subsection{$7$-Node Directed Line Network}\label{example-7-node}
Consider a  $7-$node network with the following edge weights: $a_{21}=0.7,\; a_{32}=0.8,\;a_{43}=0.9,\; a_{54}=0.6,\;a_{65}=0.7,\;a_{76}=0.5$. When observing discrete-time dynamics, we assume no self-loops, whereas we consider self-loops with weight $-1$ when the system follows continuous-time dynamics. For this network, we note that the number of non-self-loop edges in the network is $n_e=6$ and the set $\Ec_A = \{\,(1,2),\,(2,3),\,(3,4),\, (4,5),\, (5,6),\, (6,7)\,\}$. Next, consider nodal impulse/step inputs of magnitude $u=50$, with a step input duration of $T_s = 5$ and a simulation time of $T=30$. The initial states are assumed to be $0$. 

\textit{(i) Discrete-time dynamics:} For the discrete-time dynamics,  we use Theorem~\ref{thm-first-order-effects-impulse} to compute the VM matrix, with the results compiled in Table \ref{table:VM-discrete-impulse} (we use shades of red color for impulse inputs and shades of blue color for step inputs).

\begin{table}[htbp] 
\centering
\newcolumntype{C}{>{\centering\arraybackslash}X}
\addtolength{\tabcolsep}{-0.1em}
\begin{tabular}{lS[table-format=2.2]*{8}{S}}
\toprule 
\addlinespace
{$k$}& $\VM_{12}$ & $\VM_{23}$ & $\VM_{34}$ & $\VM_{45}$& $\VM_{56}$ & $\VM_{67}$ & $\nu_k$
\tabularnewline
\cmidrule[\lightrulewidth](lr){1-8}\addlinespace[1ex]
$1$ & 0.73 & 0.51 & 0.46 & 0.12 & 0.07 & 0.01 & \cellcolor[HTML]{FF2E2E}1.91 \tabularnewline
$2$ & 0.49 & 1.05 & 0.94 & 0.25 & 0.14 & 0.03 & \cellcolor[HTML]{D10000}2.90  \tabularnewline
$3$ & 0 & 0.64 & 0.47 & 0.40 & 0.21 & 0.05 & \cellcolor[HTML]{FF0000}2.76 \tabularnewline
$4$ & 0 & 0 & 0.81 & 0.49 & 0.26 & 0.06 & \cellcolor[HTML]{FF5C5C}1.62 \tabularnewline
$5$ & 0 & 0 & 0 & 0.36 & 0.73 & 0.15 & \cellcolor[HTML]{FF8A8A}1.24 \tabularnewline
$6$ & 0 & 0 & 0 & 0 & 0.49 & 0.31 & \cellcolor[HTML]{FFB8B8}0.80 \tabularnewline
$7$ & 0 & 0 & 0 & 0 & 0 & 0.25 & \cellcolor[HTML]{FFE6E6}0.25\tabularnewline
\addlinespace
\bottomrule
\end{tabular}
\caption{The Vulnerability Matrix for the discrete-time case with impulse input, with shaded vulnerability influence $\nu_k$-- darker shade indicates a greater potential target.}
\label{table:VM-discrete-impulse}
\smallskip
\end{table}

Next, we numerically simulate the dynamics in the time interval $ [0,\, T]$  and compile the $\Jc_{\infty}^f$ and  $\Jc_2^f$ edge flow-related performance metrics in Table \ref{table:max-flow-discrete-impulse}. Note that the largest $\Jc_{\infty}^f$ and  $\Jc_2^f$ flows occur when the input is applied at node $2$, which is also the node with the maximum nodal influence. Consequently, node $2$ of the network serves as the optimal location for an attacker to launch an impulse-input attack.

\begin{table}[htbp] \centering
\newcolumntype{C}{>{\centering\arraybackslash}X}
\addtolength{\tabcolsep}{-0.27em}
\begin{tabular}{lS[table-format=2.2]*{6}{S}| *{1}{S}}
\toprule 
\addlinespace
{$k$}& {$\mathcal{F}_{12}$} & $\mathcal{F}_{23}$ & $\mathcal{F}_{34}$ & $\mathcal{F}_{45}$& $\mathcal{F}_{56}$ & $\mathcal{F}_{67}$ & $\Jc_{\infty}^f$ & {$\Jc_{2}^f\times10^3$}
\tabularnewline
\cmidrule[\lightrulewidth](lr){1-9}\addlinespace[1ex]
$1$ &35.0& 28.0& 25.2 & 15.1 & 10.6 &5.3 & \cellcolor[HTML]{FF2E2E}119.2 & \cellcolor[HTML]{FF2E2E}4.8 \tabularnewline
$2$ & 35.0 & 40.0 & 36.0 & 21.6 & 15.1 & 7.6 & \cellcolor[HTML]{D10000}155.3 &\cellcolor[HTML]{D10000}7.2 \tabularnewline
$3$ & 0 & 40.0 & 45.0 & 27.0 & 18.9 & 9.5 & \cellcolor[HTML]{FF0000}140.4 &\cellcolor[HTML]{FF0000}6.9\tabularnewline
$4$ & 0 & 0 & 45.0 & 30.0 & 21.0 & 10.5 & \cellcolor[HTML]{FF5C5C}106.5 &\cellcolor[HTML]{FF5C5C}4.0\tabularnewline
$5$ & 0 & 0 & 0 & 30.0 & 35.0 & 17.5& \cellcolor[HTML]{FF8A8A}82.5 &\cellcolor[HTML]{FF8A8A}3.1 \tabularnewline
$6$ & 0 & 0 & 0 & 0 & 35.0 & 25.0 & \cellcolor[HTML]{FFB8B8}60.0 &\cellcolor[HTML]{FFB8B8}2.0\tabularnewline
$7$ & 0 & 0 & 0 & 0 & 0 & 25.0 & \cellcolor[HTML]{FFE6E6}25.0 &\cellcolor[HTML]{FFE6E6}0.6\tabularnewline
\addlinespace
\bottomrule
\end{tabular}
\caption{The maximum flow deviations (where $\mathcal{F}_{ij}=\max|F_{ij}|$) on the edges for the discrete-time case with impulse inputs, along with the $\Hc_2$ and $\Hc_{\infty}$ performance metrics-- a darker shade indicates a greater potential target.}
\label{table:max-flow-discrete-impulse}
\end{table}

For a step input, the VM is computed using Theorem \ref{thm-line-network-step-discrete} and is presented in Table \ref{table:VM-discrete-step}. Note that here $N=T_s$. When numerically simulating the flows based on system dynamics, we observe that for the step input, the metric $\Jc_{\infty}^f$ and the max flow deviations are identical to the impulse input scenario as tabulated in Table \ref{table:max-flow-discrete-impulse}. The values of $\Jc_2^f$ for the step-input case are tabulated in Table \ref{table:VM-discrete-step}. We note that, similar to the impulse input scenario, there is alignment in the most vulnerable node when comparing the two criteria: vulnerability influence and flow-based performance metrics.

\begin{table}[h] \centering
\newcolumntype{C}{>{\centering\arraybackslash}X}
\addtolength{\tabcolsep}{-0.27em}
\begin{tabular}{lS[table-format=2.2]*{6}{S}| *{1}{S}}
\toprule 
\addlinespace
{$k$}& $\VM_{12}$ & $\VM_{23}$ & $\VM_{34}$ & $\VM_{45}$& $\VM_{56}$ & $\VM_{67}$ & $\nu_k$ & ${\Jc_2^f\times 10^4}$
\tabularnewline
\cmidrule[\lightrulewidth](lr){1-9}\addlinespace[1ex]
$1$ & 0.95& 0.58 & 0.47 & 0.20 & 0.09 & 0.03 &\cellcolor[HTML]{B8DCFF}  2.32 & \cellcolor[HTML]{B8DCFF} 0.58 \tabularnewline
$2$ & 2.94 & 1.18 & 0.96 & 0.40 & 0.18 & 0.06& \cellcolor[HTML]{2E9AFF}  5.72 &\cellcolor[HTML]{2E9AFF} 1.43\tabularnewline
$3$ & 0 & 3.84 & 1.51 & 0.63 & 0.28 & 0.09 & \cellcolor[HTML]{006CD1} 6.35 &\cellcolor[HTML]{006CD1} 1.58\tabularnewline
$4$ & 0 & 0 & 4.86 & 0.78 & 0.34& 0.11& \cellcolor[HTML] {0084FF} 6.09& \cellcolor[HTML]{0084FF} 1.52\tabularnewline
$5$ & 0 & 0 & 0 & 2.16 &0.95 & 0.31 &\cellcolor[HTML]{8AC6FF} 3.42 &\cellcolor[HTML]{8AC6FF} 0.85\tabularnewline
$6$ & 0 & 0 & 0 & 0 & 2.94 & 0.63 & \cellcolor[HTML]{5CB0FF} 3.57 & \cellcolor[HTML]{5CB0FF} 0.89\tabularnewline
$7$ & 0 & 0 & 0 & 0 & 0 & 1.50 &\cellcolor[HTML]{E6F3FF} 1.50& \cellcolor[HTML]{E6F3FF} 0.38\tabularnewline
\addlinespace
\bottomrule
\end{tabular}
	\caption{The Vulnerability Matrix for the discrete-time case with step input with shaded vulnerability influence $\nu_k$ and the corresponding $\Hc_2$ performance.}
	\label{table:VM-discrete-step}
\end{table}

\textit{(ii) Continuous-time dynamics:} 
We use MATLAB `\textit{ode45}' with a time span $[0,\, T]$ for simulating the continuous-time dynamics. Here, for the step-input case, we take $N=10$.

Next, we follow a simulation approach identical to the one used in the discrete-time case. The VM is computed using Theorem~\ref{thm-first-order-effects-impulse} and compiled in Table~\ref {table:VM-continuous-impulse}. 
\begin{table}[htbp] 
\centering
\newcolumntype{C}{>{\centering\arraybackslash}X}
\addtolength{\tabcolsep}{-0.2em}
\begin{tabular}{lS[table-format=2.2]*{7}{S}}
\toprule 
\addlinespace
{$k$}& $\VM_{12}$ & $\VM_{23}$ & $\VM_{34}$ & $\VM_{45}$& $\VM_{56}$ & $\VM_{67}$ & $\nu_k$
\tabularnewline
\cmidrule[\lightrulewidth](lr){1-8}\addlinespace[1ex]
$1$ & 0.13 & 0.02 & 0.01 & 0 & 0 & 0 &\cellcolor[HTML]{FFB8B8}0.16\tabularnewline
$2$ & 0.25& 0.17 & 0.03 & 0.01 & 0& 0 & \cellcolor[HTML]{FF2E2E} 0.46\tabularnewline
$3$ & 0 & 0.32 & 0.20 & 0.03 & 0 & 0& \cellcolor[HTML]{D10000} 0.55\tabularnewline
$4$ & 0 & 0 & 0.41 & 0.10 & 0.01 & 0 &\cellcolor[HTML]{FF0000} 0.52 \tabularnewline
$5$ & 0 & 0 & 0 & 0.18 & 0.13 & 0.01 & \cellcolor[HTML]{FF8A8A} 0.32 \tabularnewline
$6$ & 0 & 0 & 0 & 0 & 0.25 & 0.08 & \cellcolor[HTML]{FF5C5C} 0.33 \tabularnewline
$7$ & 0 & 0 & 0 & 0 & 0 & 0.13 & \cellcolor[HTML]{FFE6E6} 0.13\tabularnewline
\addlinespace
\bottomrule
\end{tabular}
	\caption{The Vulnerability Matrix for the continuous-time case with impulse input, with shaded vulnerability influence $\nu_k$-- darker shade indicates a greater potential target.}
	\label{table:VM-continuous-impulse}
\end{table}
The $\Jc_{\infty}^f$ and $\Jc_2^f$ performance metrics are tabulated in Table \ref{table:max-flow-continuous-impulse}. Based on the VM, node $3$ emerges as the most influential and vulnerable node—a finding directly corroborated by both the $\Jc_{\infty}^f$ and $\Jc_2^f$ metrics. Consequently, node $3$ represents the most strategic target for a step-input attack.

Finally, we consider step inputs. Table~\ref{table:VM-continuous-step} presents the VM computed using Theorem~\ref{thm-line-network-impulse-train-continuous} and Table~\ref{table:max-flow-continuous-step} the flow-based performance metrics. We note that, similar to the discrete-time dynamics, while there is no consensus node, node $4$ is identified as the most vulnerable by at least two metrics and thus the most critical in the network.

\begin{table}[htbp] \centering
\newcolumntype{C}{>{\centering\arraybackslash}X}
\addtolength{\tabcolsep}{-0.27em}
\begin{tabular}{l S[table-format=2.2] *{6}{S} | *{1}{S}}
\toprule 
\addlinespace
{$k$}& $\mathcal{F}_{12}$ & $\mathcal{F}_{23}$ & $\mathcal{F}_{34}$ & $\mathcal{F}_{45}$& $\mathcal{F}_{56}$ & $\mathcal{F}_{67}$ & $\Jc_{\infty}^f$ & $\Jc_{2}^f$
\tabularnewline
\cmidrule[\lightrulewidth](lr){1-9}\addlinespace[1ex]
$1$ &0.34& 0.07 & 0.02& 0.01 & 0 & 0 & \cellcolor[HTML]{FFB8B8} 0.47 & \cellcolor[HTML]{FFB8B8} 0.04\tabularnewline
$2$ & 0.34 & 0.39 & 0.09 & 0.04 & 0.02 & 0 &\cellcolor[HTML]{FF0000} 0.88 &\cellcolor[HTML]{FF2E2E} 0.11\tabularnewline
$3$ & 0 & 0.39& 0.44& 0.07 & 0.03 & 0.01 & \cellcolor[HTML]{D10000} 0.94& \cellcolor[HTML]{D10000} 0.14\tabularnewline
$4$ & 0 & 0 & 0.44 & 0.29 & 0.05 & 0.02& \cellcolor[HTML]{FF2E2E} 0.80 & \cellcolor[HTML]{FF0000} 0.13\tabularnewline
$5$ & 0 & 0 & 0 & 0.29& 0.34& 0.05& \cellcolor[HTML]{FF5C5C} 0.68 & \cellcolor[HTML]{FF8A8A} 0.08\tabularnewline
$6$ & 0 & 0 & 0 & 0 & 0.34 & 0.25 & \cellcolor[HTML]{FF8A8A} 0.59 &\cellcolor[HTML]{FF8A8A} 0.08\tabularnewline
$7$ & 0 & 0 & 0 & 0 & 0 & 0.25 & \cellcolor[HTML]{FFE6E6} 0.25 & \cellcolor[HTML]{FFE6E6} 0.03\tabularnewline
\addlinespace
\bottomrule
\end{tabular}
\caption{The max flow deviations on the edges for the continuous-time case with impulse inputs, along with the $\Hc_2$ and $\Hc_{\infty}$ performance metrics-- darker shade indicates a greater potential target.}
\label{table:max-flow-continuous-impulse}
\end{table}

\begin{table}[h] \centering
\newcolumntype{C}{>{\centering\arraybackslash}X}
\addtolength{\tabcolsep}{-0.3em}
\begin{tabular}{lS[table-format=3.1]*{7}{S}}
\toprule 
\addlinespace
{$k$}& $\VM_{12}$ & $\VM_{23}$ & $\VM_{34}$ & $\VM_{45}$& $\VM_{56}$ & $\VM_{67}$ & $\nu_k$
\tabularnewline
\cmidrule[\lightrulewidth](lr){1-8}\addlinespace[1ex]
$1$ & 1.90& 0.63& 0.31& 0.27& 0.08& 0.04 & \cellcolor[HTML]{E6F3FF} 3.23 \tabularnewline
$2$ & 9.09 & 1.98& 0.81 & 0.61& 0.19 & 0.09
&\cellcolor[HTML]{2E9AFF} 12.76\tabularnewline
$3$ & 0 & 11.88&2.11& 1.10&0.33& 0.15& \cellcolor[HTML]{0084FF} 15.57\tabularnewline
$4$ & 0 & 0 & 15.03 & 1.81&6.48& 0.20 & \cellcolor[HTML]{006CD1} 17.52\tabularnewline
$5$ & 0 & 0 & 0 & 6.68& 1.91 & 0.62& \cellcolor[HTML]{8AC6FF} 9.21\tabularnewline
$6$ & 0 & 0 & 0 & 0 & 9.09 & 1.62& \cellcolor[HTML]{5CB0FF} 10.71\tabularnewline
$7$ & 0 & 0 & 0 & 0 & 0 &4.64& \cellcolor[HTML]{B8DCFF} 4.64\tabularnewline
\addlinespace
\bottomrule
\end{tabular}
	\caption{The Vulnerability Matrix for the continuous-time case with step input with shaded vulnerability influence $\nu_k$.}
 \vspace{-1mm}
	\label{table:VM-continuous-step}
\end{table}

\begin{table}[h] \centering
\newcolumntype{C}{>{\centering\arraybackslash}X}
\addtolength{\tabcolsep}{-0.27em}
\begin{tabular}{l S[table-format=2.2] *{6}{S} | *{1}{S}}
\toprule 
\addlinespace
{$k$}& $\mathcal{F}_{12}$ & $\mathcal{F}_{23}$ & $\mathcal{F}_{34}$ & $\mathcal{F}_{45}$& $\mathcal{F}_{56}$ & $\mathcal{F}_{67}$ & $\Jc_{\infty}^f$ & {$\Jc_{2}^f\times10^3$}
\tabularnewline
\cmidrule[\lightrulewidth](lr){1-9}\addlinespace[1ex]
$1$ &9.8& 5.8 & 4.0& 3.8 & 2.0 & 1.3 & \cellcolor[HTML]{B8DCFF} 26.8 &\cellcolor[HTML]{E6F3FF} 0.6\tabularnewline
$2$ & 21.2 & 10.3 & 6.6 & 5.6 & 3.1 & 2.0 & \cellcolor[HTML]{0084FF} 48.9& \cellcolor[HTML]{2E9AFF} 2.4 \tabularnewline
$3$ & 0 & 24.1& 10.7 & 7.4& 4.2 & 2.7& \cellcolor[HTML]{006CD1} 49.1& \cellcolor[HTML]{0084FF} 3.0\tabularnewline
$4$ & 0 & 0 & 27.3 & 9.2 & 5.0 & 3.2 & \cellcolor[HTML]{2E9AFF} 44.8& \cellcolor[HTML]{006CD1} 3.4 \tabularnewline
$5$ & 0 & 0 & 0 & 17.9& 9.8& 5.5& \cellcolor[HTML]{5CB0FF} 33.2& \cellcolor[HTML]{8AC6FF} 1.8\tabularnewline
$6$ & 0 & 0 & 0 & 0 & 21.3 & 8.5 & \cellcolor[HTML]{8AC6FF} 29.8 & \cellcolor[HTML]{5CB0FF} 2.1\tabularnewline
$7$ & 0 & 0 & 0 & 0 & 0 & 15.3 & \cellcolor[HTML]{E6F3FF} 15.3& \cellcolor[HTML]{B8DCFF} 0.9 \tabularnewline
\addlinespace
\bottomrule
\end{tabular}
\caption{The max flow deviations (where $\mathcal{F}_{ij}=\max|F_{ij}|$) on the edges for the continuous-time case with step inputs, along with the $\Hc_2$ and $\Hc_{\infty}$ performance metrics.}
\label{table:max-flow-continuous-step}
\end{table}

\subsection{Random Erd\H{o}s-R\'{e}nyi Networks}\label{example-erdos-renyi}
Next, we consider random Erd\H{o}s-R\'{e}nyi (ER) networks~\cite{PE-AR:60} to illustrate the efficacy of our proposed approach and its utility. To this end, we consider $1000$ random ER networks, each having $n=100$ nodes. As before, we study both discrete-time and continuous-time cases. For each network, the edge locations are selected with a probability of $0.3$. For the discrete-time case, the $A$ matrix is Schur stable~\cite{KZ-JD-KG:95} and without self-loops. For the continuous-time case, $A$ is rendered Hurwitz by the addition of negative weighted self-loops~\cite{KZ-JD-KG:95}. For these simulations, a step duration of $T_s=10$ is considered. 

We use Algorithm~\ref{alg:validation_a} to compute the VM and Algorithm~\ref{alg:validation_b} to evaluate its effectiveness in identifying the most vulnerable nodes. The VM computation relies on Theorems \ref{thm-first-order-effects-impulse}, \ref{thm-first-order-effects-step-discrete}, and \ref{thm-first-order-effects-step-continuous}. We note that $N=T_s$ in Theorem \ref{thm-first-order-effects-step-discrete} for the discrete-time case, and in Theorem \ref{thm-first-order-effects-step-continuous} for the continuous-time case; we set $N=20$ and $t_N=T_s$. For each of these cases, we form the set $V_s$, i.e., the set of sorted top $N_s = 0.1n$ influential input nodes computed from the VM. Subsequently, we numerically simulate the dynamics for each network for a time span of $[0,\, T]$ with $T=50$ and for the input magnitude $u=50$. The flow performances $\Jc_2^f$ and $\Jc^f_{\infty}$ are evaluated for each input node, and the set $\Vc_f$ is constructed for discrete-time dynamics (impulse and step inputs) and continuous-time dynamics  (impulse and step inputs). Finally, we determine the effectiveness measure $\kappa$ for each case. The results presented in Fig.~\ref{fig:performance} showcase the average $\kappa$ measure for the considered $1000$ random ER network sets, with a tight clustering around the mean. We also note that across the scenarios, there is a high degree of consistency, i.e., the nodal vulnerability as determined from the Vulnerability Matrix effectively captures the nodal influence evaluated through the flow performance metrics. The effectiveness measure is stronger for discrete-time dynamics and for the average flow metric $\Jc_2^f$ as compared to continuous-time and the max flow metric $\Jc_\infty^f$.

\begin{figure}[htbp]
\centering
\tikzset{every picture/.style={scale=1.0}}
\input{kappa.tex}
\caption{Average effectiveness measure $\kappa$ for $1000$ random Erd\H{o}s-R\'{e}nyi networks. The variance around the mean in each case for either $\Jc_2^f$ or $\Jc_\infty^f$ is never larger than $1.5e-2$.} 
\label{fig:performance}
\end{figure} 

\section{Conclusions}
We have presented a framework to analyze the effect of nodal inputs on edge flows in networks with stable linear (either discrete or continuous-time) dynamics. For both impulse and step inputs, we have provided explicit expressions for the first-order effects of the nodal inputs on system states and edge flows in terms of the corresponding controllability Gramians for general networks. We have particularized these results to the class of directed line networks, providing closed-form expressions in terms of edge weights, input duration, and distance in the graph to the nodal input. These results have led us to introduce the notion of a (relative) vulnerability matrix as a novel metric to help identify critical nodes in the network that could be potential targets for adversaries. Our numerical simulations demonstrate the tight correspondence between the topmost set of critical nodes identified from edge-flow performance indicators such as $\Hc_\infty$ and $\Hc_2$ and from the Gramian-informed vulnerability matrix, thus illustrating its efficacy. We believe the computational tractability of the metrics proposed here, along with the analytical insights that can be obtained from the explicit closed-form expressions, opens up the way to efficiently identify and exploit dependencies and strengthen large-scale vulnerable networks such as power grids and transportation networks. Future work will extend the analytical characterization of the vulnerability matrix to second-order models and develop game-theoretic strategies to counter adversarial nodal attacks.

\bibliographystyle{IEEEtran}
\bibliography{ref}
\end{document}

%% file: kappa.tex
%
%
\definecolor{mycolor1a}{RGB}{136,176,197}
\definecolor{mycolor2a}{RGB}{168,50,45}
\begin{tikzpicture}
\begin{axis}[
ylabel={$\kappa$},
ymin=0,
ymax=1.15,
yticklabel style = {font=\footnotesize,xshift=0ex},
xticklabel style = {font=\footnotesize,yshift=0ex},
axis background/.style={fill=white},
xmajorgrids,
ymajorgrids,
legend style={legend cell align=left, align=left, draw=black},
width=3.4in,
height=2in,
bar width=0.3cm,
xtick={1,2,3,4},
ytick={0, 0.25, 0.50, 0.75, 1.00},
xticklabel style={align=center, text width=1.5cm},
xticklabels={Discrete impulse, Discrete step, Continuous impulse, Continuous step},
every axis plot/.append style={ultra thin},
axis background/.style={fill=white},
legend style={at={(0.93,0.99)}, legend cell align=left, align=left, draw=black, font=\small, draw=none, legend columns=-1}
]

\addplot[ybar, fill=mycolor2a, draw=black, area legend] coordinates {
    (0.85, 1)
    (1.85, 1)
    (2.85, 0.99)
    (3.85, 0.91)
};

\addplot[ybar, fill=mycolor1a, draw=black, area legend] coordinates {
    (1.15, 0.76)
    (2.15, 0.75)
    (3.15, 0.74)
    (4.15, 0.57)
};

\legend{$\Jc_2^f$, $\Jc^f_{\infty}$ }
\end{axis}
\end{tikzpicture}